%
\documentclass[runningheads]{llncs}
\usepackage[T1]{fontenc}
%
\usepackage{graphicx}
\usepackage{amsmath, mathtools}
\usepackage{amssymb, bm}
\usepackage[misc]{ifsym}
\usepackage[table]{xcolor}
\usepackage{booktabs}
\usepackage{multirow}
\usepackage{tabularx}
\usepackage{hyperref}
\usepackage[linesnumbered,ruled,vlined]{algorithm2e}
%
%
\setlength{\textwidth}{16.4cm}
\setlength{\oddsidemargin}{-0cm}
\setlength{\evensidemargin}{-0cm}
\setlength{\textheight}{23.2cm}
\setlength{\topmargin}{-1.2cm}
\spnewtheorem{assumption}[case]{Assumption}{\bfseries}{\itshape}
\spnewtheorem{fact}[example]{Fact}{\bfseries}{\itshape}
\begin{document}
\title{PDHCG: A Scalable First-Order Method for Large-Scale Competitive Market Equilibrium Computation}
%
%
\author{Huikang Liu\inst{1} \and
Yicheng Huang\inst{2} \and
Hongpei Li\inst{3}\and
Dongdong Ge$^{(\textrm{\Letter})}$\inst{4} \and
Yinyu Ye\inst{5}}
\authorrunning{H. Liu et al.}
%
\institute{Shanghai Jiao Tong University, Shanghai, China. 
\email{hkl1u@sjtu.edu.cn}  \and Shanghai University of Finance and Economics, Shanghai, China. 
\email{huangyc@stu.sufe.edu.cn}
\and Shanghai University of Finance and Economics, Shanghai, China. 
\email{ishongpeili@gmail.com}
\and Shanghai Jiao Tong University, Shanghai, China. 
\email{ddge@sjtu.edu.cn} \and 
Stanford University, California, USA.  \email{yinyu-ye@stanford.edu}}
\maketitle              
\begin{abstract}
Large-scale competitive market equilibrium problems arise in a wide range of important applications, including economic decision-making and intelligent manufacturing. Traditional solution methods, such as interior-point algorithms and certain projection-based approaches, often fail to scale effectively to large problem instances. In this paper, we propose an efficient computational framework that integrates the primal-dual hybrid conjugate gradient (PDHCG) algorithm with GPU-based parallel computing to solve large-scale Fisher market equilibrium problems. By exploiting the underlying mathematical structure of the problem, we establish a theoretical guarantee of linear convergence for the proposed algorithm. Furthermore, the proposed framework can be extended to solve large-scale Arrow–Debreu market equilibrium problems through a fixed-point iteration scheme. Extensive numerical experiments conducted on GPU platforms demonstrate substantial improvements in computational efficiency, significantly expanding the practical solvable scale and applicability of market equilibrium models.

\keywords{Competitive market equilibrium \and Fisher market \and Arrow-Debreu market \and PDHCG  \and  GPU Acceleration \and Linear convergence.}
\end{abstract}

\section{Introduction}
The Arrow–Debreu competitive market equilibrium is a cornerstone of modern economic theory, tracing its origins to Walras’s foundational work in 1874~\cite{walras1900elements}. Walras posed the fundamental question of whether it is possible to assign prices to goods in such a way that every consumer maximizes their utility and all markets simultaneously clear. This question was affirmatively answered by Arrow and Debreu in 1954~\cite{arrow-debreu}, who proved the existence of such an equilibrium under the assumption of concave utility functions. However, their proof was existential in nature and did not yield a constructive method for computing the equilibrium, leaving open the algorithmic challenge of finding such prices.

Fisher was the first to consider an algorithmic approach \cite{brainard2005compute} for computing equilibrium prices in a related, yet distinct, market model where participants are divided into two sets: consumers and producers. In the Fisher market model, there are $m$ producers and $n$ consumers. Each consumer $i \in [n]$ is endowed with a budget $w_i > 0$ and seeks to purchase goods to maximize their individual utility, while each producer $j \in [m]$ offers one unit of a good for sale. A market equilibrium consists of a set of prices such that every consumer purchases an optimal bundle within their budget, and all goods are fully sold with all money fully spent—i.e., the market clears. It has been shown that if the consumers’ utility functions satisfy the CCNH conditions (Concave, Continuous, Nonnegative, and Homogeneous), then the market equilibrium can be computed by solving the Eisenberg–Gale convex program \cite{eisenberg1959consensus}:
\begin{align}\label{Eisenberg-Gale} 
    \begin{split}
        \max_{x} \quad & \sum_{i \in [n]} w_i \log u_{i} (x_{i}) \\
        \text{s.t.} \quad & \sum_{i \in [n]} x_{ij} = 1, \, \forall j \in [m] \\
        & x_{ij} \geq 0, \, \forall i,j
    \end{split}
\end{align}
where $x_{ij}$ denotes the amount of good $j$ purchased by consumer $i$, and $u_i(x_i)$ represents the total utility derived by consumer $i$ from their bundle. Several standard utility functions are commonly considered in this setting, including linear utility where $u_i(x_i) = \sum_{j \in [m]} u_{ij} x_{ij}$, CES utility where $u_i(x_i) = \left( \sum_{j \in [m]} u_{ij} x_{ij}^p \right)^{1/p}, \; p \leq 1$, and Leontief utility where $u_i(x_i) = \min_{j \in [m]} \left\{ \frac{x_{ij}}{u_{ij}} \right\}$.
In this paper, we primarily focus on the case where utility functions are linear.

The Fisher market model exhibits appealing theoretical properties, especially when all consumers have equal budgets. Under this condition, the resulting equilibrium satisfies the Competitive Equilibrium from Equal Incomes (CEEI) mechanism \cite{vazirani2007combinatorial}, which guarantees envy-freeness, Pareto optimality, and incentive compatibility in large markets \cite{budish2011combinatorial,azevedo2019strategy}. This mechanism has been successfully applied in course allocation at the Wharton School \cite{budish2016bringing}. Beyond education, Fisher markets have been used in applications such as recommendation systems, online auctions, resource allocation, and donation matching \cite{kroer2019scalable,bateni2022fair,mcelfresh2020matching,parkes2015beyond}, highlighting their broad practical relevance in promoting fairness and efficiency.

With the rapid advancement of technology and networked systems, market equilibrium problems are being formulated at increasingly larger scales. \emph{A central challenge that arises is how to efficiently compute equilibrium in large-scale Fisher market -- and more generally, in Arrow–Debreu market models.}

In 2002, Devanur et al. ~\cite{devanur2002market} proposed a combinatorial algorithm for solving the Fisher market with linear utility functions, achieving a polynomial-time complexity of $O(n^8 \log(1/\epsilon))$. Subsequently, Jain \cite{jain2007polynomial} formulated the Arrow–Debreu model as a convex inequality problem and designed an ellipsoid-based algorithm for its solution. Later, Ye \cite{ye2008path} applied interior-point methods to both Fisher and Arrow–Debreu models with linear utilities, significantly reducing the complexity to $O(n^4 \log(1/\epsilon))$. Despite these theoretical advances, the strong dependence on the problem dimension $n$ severely limits the scalability of these methods, making them impractical for solving large-scale market equilibrium problems.

Motivated by the need to address large-scale instances, first-order methods have been introduced as a promising alternative. In 2020, Gao and Kroer \cite{gao2020first} proposed a proximal gradient method for solving the Fisher market equilibrium problem and established a linear convergence rate. To further improve computational efficiency, they later incorporated block coordinate descent techniques \cite{nan2023fast}. For the Arrow–Debreu model, Devanur et al. \cite{devanur2016rational} proposed a rational convex formulation under the special setting where there is a one-to-one correspondence between participants and goods. Building on this, Chen et al. \cite{chen2022alternating} developed an alternating optimization algorithm to solve the resulting convex program. However, a major bottleneck in these approaches lies in the requirement of polyhedral projections at each iteration, which limits their scalability. As a result, the dimensionality of solvable instances remains restricted to problems of only a few thousand variables.

Recently, GPU-accelerated primal–dual first-order methods have gained considerable attention for solving large-scale convex optimization problems. These methods have demonstrated remarkable success in various domains, including linear programming \cite{applegate2021practical,lu2023cupdlp,chen2024hpr}, quadratic programming \cite{lu2023practical,huang2024restarted}, semidefinite programming \cite{han2024low,han2024accelerating}, conic programming \cite{lin2025pdcs}, optimal transport \cite{lu2024pdot,zhang2025hot}, and large-scale network flow problems \cite{zhang2025solving}; see the further related work section for more details. These GPU-enabled primal–dual methods offer significant improvements over traditional approaches in terms of both scalability and computational efficiency.
Motivated by these advances, we aim to develop an efficient GPU-accelerated primal–dual framework for solving the Fisher market model and, more generally, the Arrow–Debreu market equilibrium problem.

\subsubsection{Our Contributions.}
In this paper, we develop a scalable and efficient computational framework for solving large-scale market equilibrium problems. Our main contributions are as follows:
\begin{itemize}
    \item[1.] \emph{GPU-Accelerated Primal–Dual Framework.} We propose a primal-dual hybrid conjugate gradient (PDHCG) algorithm tailored for Fisher market models with linear utilities, and implement it on GPU architectures to exploit massive parallelism. The algorithm avoids expensive matrix factorizations, enabling efficient large-scale computation.
    \item[2.] \emph{Theoretical Guarantee.} By leveraging the underlying mathematical structure of the problem, we establish a theoretical guarantee of linear convergence for the proposed PDHCG method.
    \item[3.] \emph{Extension to Arrow–Debreu Equilibrium.} We integrate the PDHCG algorithm into a fixed-point iteration scheme, allowing it to solve large-scale Arrow–Debreu market equilibrium problems beyond the Fisher setting.
    \item[4.] \emph{Comprehensive Experimental Validation.} We conduct extensive experiments on both synthetic data and real-world transaction data (from JD.com), demonstrating that our GPU-based framework significantly outperforms traditional solvers in terms of scalability and runtime—solving problems with up to tens of millions of variables.
\end{itemize}

\subsubsection{Further Related Work.} 
First-order algorithms, which primarily rely on matrix–vector multiplications, are inherently well-suited for parallelization. This computational structure enables them to fully leverage the massive parallelism of modern GPUs, leading to significantly improved speed and scalability. In linear programming, Lu and Yang \cite{applegate2021practical} introduced the Primal-Dual Linear Programming (PDLP) method, which was later adapted to GPU platforms \cite{lu2023cupdlp}. This led to the development of cuPDLP-C \cite{lu2023cupdlpc}, the first high-performance, GPU-native first-order LP solver implemented in C. In parallel, Chen et al. \cite{chen2024hpr} proposed HPR-LP, a solver based on the Halpern–Peaceman–Rachford method with semi-proximal terms, which achieves further speedups over PDLP.

For quadratic programming, Lu \cite{lu2023practical} and Huang \cite{huang2024restarted} developed the PDQP and PDHCG algorithms, respectively, enabling the solution of problems with hundreds of millions of variables. In semidefinite programming, Han \cite{han2024low} introduced the LoRADS algorithm and its GPU-enhanced version cuLoRADS \cite{han2024accelerating}, scaling up the solvable problem size by several orders of magnitude. For general conic linear programs, Lin et al. \cite{lin2025pdcs} proposed PDCS, a large-scale primal-dual solver with GPU acceleration. In optimal transport, Lu \cite{lu2024pdot} introduced the PDOT algorithm for solving large-scale problems efficiently. These GPU-based methods have also been applied to real-world applications; for instance, Zhang \cite{zhang2025solving} leveraged GPU-accelerated solvers to address multi-commodity network flow problems at scale.

\subsubsection{Organization.} 
The remainder of this paper is organized as follows. Section~\ref{sec:algorithm} introduces the standard Primal–Dual Hybrid Gradient (PDHG) algorithm and presents our proposed PDHCG algorithm tailored for the Fisher market problem. In Section~\ref{sec:convergence}, we establish the linear convergence rate of the algorithm by proving the quadratic growth property of the smoothed duality gap and leveraging this property to derive convergence guarantees. Section~\ref{sec:arrow-debreu} extends our framework to the Arrow–Debreu market equilibrium problem by proposing a fixed-point iteration scheme that incorporates the PDHCG algorithm; we also provide convergence analysis under mild conditions. Section~\ref{sec:numerical} presents empirical evaluations on both synthetic and real-world market instances, demonstrating the efficiency and scalability of our approach. Finally, Section~\ref{sec:conclusion} concludes the paper with a summary of findings.

\section{Primal-Dual Hybrid Conjugate Gradient Method}\label{sec:algorithm}
In this section, we review the standard Primal–Dual Hybrid Gradient (PDHG) method proposed in~\cite{chambolle2011first} and introduce the PDHCG-type method~\cite{huang2024restarted} for solving the Fisher market equilibrium problem formulated as the Eisenberg–Gale convex program~\eqref{Eisenberg-Gale}. Throughout this paper, we make the following assumption:
\begin{assumption}\label{ass:positive}
    There is at least one $u_{ij} > 0$ for every $i \in [n]$, and at least one $u_{ij} > 0$ for every $j \in [m]$.
\end{assumption}
This is to say that every consumer in the market likes at least one good; and every good is valued by at least one consumer. Assumption \ref{ass:positive} is quite standard and necessary to guarantee that each good can have a positive equilibrium price \cite{ye2008path}.
\subsection{PDHG Methods}
By introducing auxiliary variables $t$, the Eisenberg–Gale model \eqref{Eisenberg-Gale} can be equivalently reformulated as:
\begin{align}\label{Eisenberg-Gale-reformulation}
    \begin{split}
        \min_{x, t} \quad & \sum_{i \in [n]} -w_i \log t_i \\
        \text{s.t.} \quad & \sum_{i \in [n]} x_{ij} = 1, \, \forall j \in [m] \\
        & t_i - \sum_{j \in [m]} u_{ij} x_{ij}=0, \, \forall i \in [n] \\
        & t_i, x_{ij} \geq 0, \, \forall i,j.
    \end{split}
\end{align}
Notice that it is computationally challenging to directly project onto the constraint set of~\eqref{Eisenberg-Gale-reformulation}, which involves both column-sum and row-sum coupling constraints. This difficulty becomes particularly pronounced in large-scale settings. To overcome this, we instead consider the corresponding primal–dual formulation of the problem:
\begin{equation}\label{eq:primal-dual-formulation}
    \min_{x\geq 0,\, t > 0} \max_{p, y} \; \mathcal{L} (x,t; p,y) = \left. \sum_{i \in [n]} - w_i \log t_i + \sum_{j \in [m]} p_j  \left(
\sum_{i \in [n]} x_{ij} - 1 \right) + \sum_{i \in [n]} y_i  \left( t_i - \sum_{j \in [m]}
u_{ij} x_{ij} \right. \right)
\end{equation}
where $p \in \mathbb{R}^m$ and $y\in \mathbb{R}^n$ are the dual variables introduced to enforce the market-clearing and utility constraints, respectively. Notably, the optimal dual variable $p^\star$ corresponds to the equilibrium prices of the goods \cite{ye2008path}. To solve the resulting saddle-point problem, we adopt the PDHG algorithm introduced by Chambolle and Pock \cite{chambolle2011first}. PDHG is a widely used first-order method for convex–concave minimax problems and achieves convergence by performing alternating updates to the primal and dual variables. The one-step update is formulated as follows:
\begin{equation}\label{eq:update-rule}
\begin{aligned}
(p^{k+1},y^{k+1})&= \mathop{\arg \max}_{p,\, y} \; \mathcal{L}(2x^{k}-x^{k-1},2t^{k}-t^{k-1}; p,y)- \frac{1}{2\sigma_k} \left(\|y-y^k\|_2^2+\|p-p^k\|_2^2 \right) \\
(x^{k+1},t^{k+1})&=\mathop{\arg \min}_{x\geq 0,\, t > 0} \; \mathcal{L}(x,t; p^{k+1}, y^{k+1}) +\frac{1}{2\tau_k} (\|x-x^k\|_2^2+\|t-t^k\|_2^2),\\
\end{aligned}
\end{equation}
where $\tau_k$ and $\sigma_k$ denote the primal and dual step-sizes. Note that the optimization problems in \eqref{eq:update-rule} are separable with respect to each variable $x_{ij}, t_i, p_j,$ and $y_i$. Especially, the maximization problems w.r.t. each $p_j$ and $y_i$ are unconstrained quadratic programs, and thus their closed-form update rules can be derived directly:
\begin{equation}\label{update:py}
    \begin{aligned}
     &p_{j}^{k+1}= p_{j}^k + {\sigma_k} \left(\sum_{i \in {N}}(2x_{ij}^{k}-x_{ij}^{k - 1})-1 \right),\\
     &y_{i}^{k+1}= y_{i}^k + {\sigma_k} \left(2t_i^{k}-t_i^{k-1}-\sum_{j \in {M}}u_{ij}(2 x_{ij}^{k}- x_{ij}^{k-1}) \right).  
\end{aligned}
\end{equation} 
While, for each $x_{ij}$ and $t_i$, the minimization problems become
\[
\begin{aligned}
    x_{ij}^{k+1} & =\arg \min_{x_{ij\geq 0}} \;p_j^{k+1} x_{ij}- u_{ij} y_i^{k+1} x_{ij} +\frac{1}{2\tau_k} (x_{ij}-x_{ij}^k)^2, \\
    t_i^{k+1} & =\mathop{\arg \min}_{t_i > 0} \; - w_i \log t_i + y_i^{k+1} t_i +\frac{1}{2\tau_k} (t_i-t_i^k)^2.
\end{aligned}
\]
The corresponding KKT condition for each $t_i$ is given by
\[
- \frac{w_i}{t_i}  + y_i^{k+1} + \frac{1}{\tau_k} (t_i - t_i^k) = 0, \quad \Leftrightarrow \quad t_i^2 + (\tau_k y_i^{k+1} - t_i^k) t_i - {\tau_k} w_i = 0.
\]
A simple computation shows that the optimal solution is give by
\begin{equation}\label{update:t}
    t_i^{k+1}=\frac{t_i^k - \tau_k y_i^{k+1} + \sqrt{(\tau_k y_i^{k+1} - t_i^k)^2 + 4 \tau_k w_i}}{2}.
\end{equation}
Similarly, for each $x_{ij}$, a simple computation shows that
\begin{align}\label{update:x}
    x_{ij}^{k+1}= \text{Proj}_{\mathbb{R}_+} (x_{ij}^k - {\tau_k} (p_j^{k+1} -  u_{ij} y_i^{k+1})).
\end{align}

Algorithm~\ref{algo:PDHG} outlines the standard two-loop restarted PDHG method. The algorithm begins with an initial point $(x^{0, 0}, t^{0, 0}, p^{0, 0}, y^{0, 0})$, a sequence of step sizes $\{(\sigma_k, \tau_k)\}$, and a specified restart interval $K$. Within each outer iteration $n$, the algorithm performs $K$ inner iterations using the standard PDHG updates~\cite{chambolle2011first}. The iterates at the $k$-th inner step are denoted by $(x^{n, k}, t^{n, k}, p^{n, k}, y^{n, k})$, and their running average is tracked as $(\overline{x}^{n, k}, \overline{t}^{n, k}, \overline{p}^{n, k}, \overline{y}^{n, k})$. After each inner loop, the algorithm is restarted, and the subsequent outer iteration is initialized using the averaged solution$(\overline{x}^{n, K}, \overline{t}^{n, K}, \overline{p}^{n, K}, \overline{y}^{n, K})$ obtained from the preceding inner loop.

The restart strategy is incorporated to enhance the convergence rate of the PDHG algorithm. While PDHG exhibits sublinear convergence for general convex–concave saddle-point problems~\cite{chambolle2016ergodic}, recent work~\cite{applegate2023faster,huang2024restarted,lu2023practical} shows that restarting can yield linear convergence in structured settings such as linear and quadratic programming, by exploiting properties like sharpness and quadratic growth. Motivated by these theoretical and practical gains, we adopt the restart scheme in our PDHG framework for the Fisher market problem. In the next section, we can see that the proposed method indeed enjoys a linear convergence guarantee.

\begin{algorithm}[t]
\DontPrintSemicolon
\SetAlgoLined
\SetInd{0.5em}{0.5em} 
\SetKwInput{KwInput}{Input}
\SetKwInput{KwOutput}{Output}
\SetKwInput{KwInitialize}{Initialize the inner loop}
\SetKwInput{KwRestart}{Restart the inner loop}
\SetKwFor{Repeat}{Repeat}{}{end} 
\SetKwFor{ForEach}{for each}{}{end} 

\KwInput{initial point $(x^{0, 0}, t^{0, 0}, p^{0,0},y^{0,0})$, step-size $\{(\sigma_k, \tau_k)\}$, restart frequency $K$.}
\Repeat{until $(x^{n, 0}, t^{n, 0},p^{n,0},y^{n, 0})$ converges}{
\KwInitialize{$(\overline{x}^{n,0},\overline{t}^{n,0},\overline{p}^{n, 0},\overline{y}^{n, 0}) \leftarrow (x^{n, 0}, t^{n, 0},p^{n,0},y^{n, 0})$}
    \ForEach{$k = 0, \ldots, K - 1$}{
        \Indp 
        Update ($p^{n, k+1}, y^{n, k+1}$) according to \eqref{update:py} \\
        Update ($x^{n, k+1},t^{n, k+1}$) according to \eqref{update:t} and \eqref{update:x} \\
        $(\overline{x}^{n, k+1},\overline{t}^{n, k+1},\overline{p}^{n, k+1},\overline{y}^{n, k+1} ) = \frac{k}{k+1}(\overline{x}^{n, k},\overline{t}^{n, k},\overline{p}^{n, k},\overline{y}^{n, k})+ \frac{1}{k+1}({x^{n, k+1},t^{n, k+1},p^{n, k+1},y^{n, k+1}})$\\
        \Indm 
    }
    \KwRestart{$({x^{n+1, 0},t^{n+1, 0},p^{n+1, 0},y^{n+1, 0}}) \leftarrow (\overline{x}^{n, K},\overline{t}^{n, K},\overline{p}^{n, K},\overline{y}^{n, K})$, $n \leftarrow n+1 $.}
}
\KwOutput{$(x^{n, 0}, t^{n, 0},p^{n,0},y^{n, 0})$}
\caption{Restarted PDHG}
\label{algo:PDHG}
\end{algorithm}

\subsection{PDHCG-Type Methods}
The standard restarted accelerated PDHG method, as presented in Algorithm~\ref{algo:PDHG}, is well-suited for GPU implementation and can scale to solve large Fisher market equilibrium problems. However, due to the introduction of auxiliary variables $t$ and the corresponding constraints $t_i - \sum_{j \in [m]} u_{ij} x_{ij} = 0$ for all $i \in [n]$, additional primal and dual variables must be updated at each iteration. This increases per-iteration complexity and may slow convergence, especially given the typically slow and circuitous convergence behavior of PDHG-type methods \cite{lu2024restarted}. To further improve convergence, we return to the original formulation of the problem:
\begin{equation}\label{pdhcg:fisher problem with eq}
    \begin{aligned}
        \max_{x} \quad & \sum_{i \in [n]} w_i \log \left( \sum_{j \in [m]} u_{ij} x_{ij} \right) \\
        \text{s.t.} \quad & \sum_{i \in [n]} x_{ij} = 1, \, \forall j \in [m] \\
        & x_{ij} \geq 0, \, \forall i,j.
    \end{aligned}
\end{equation}
We still consider its primal-dual formulation:
\begin{equation}\label{pdhcg:primal-dual-formulation}
    \min_{x\geq 0} \max_{p} \; \mathcal{L} (x, p) = \sum_{i \in [n]} - w_i \log \left( \sum_{j \in [m] } u_{ij} x_{ij} \right) + \sum_{j \in [m]} p_j  \left(
\sum_{i \in [n]} x_{ij} - 1 \right) .
\end{equation}
Although the minimax problem~\eqref{pdhcg:primal-dual-formulation} involves significantly fewer variables compared to~\eqref{eq:primal-dual-formulation}, a key challenge arises when applying the PDHG update: the subproblem with respect to $x$ is no longer separable and thus becomes difficult to solve efficiently.

The primal–dual hybrid conjugate gradient (PDHCG) method, first introduced in~\cite{huang2024restarted} for solving large-scale quadratic programming (QP) problems, is a variant of the standard PDHG algorithm. The key idea of PDHCG is to address the difficulty of solving the primal subproblem: rather than applying a single gradient descent step as in accelerated PDHG methods \cite{chen2014optimal,lu2023practical}, it employs a more efficient sub-algorithm to approximately solve the primal subproblem to a specified accuracy. This inexact update can significantly improve convergence speed and overall algorithmic efficiency. In~\cite{huang2024restarted}, the authors adopt the conjugate gradient method to solve the primal subproblem, which, in the QP case, reduces to an unconstrained quadratic optimization.

Fortunately, the subproblem with respect to $x$ remains row-separable. For each row $x_i$, the subproblem takes the form:
\begin{equation*}
    \min_{x_i \geq 0} - w_i \log \left( u_{i}^T x_{i} \right) +  (p^{k+1})^T x_{i} + \frac{1}{2\tau_k} \|x_i - x_i^k\|^2
\end{equation*}
where $u_i = (u_{ij})_j$ denote the $i$-th row of the utility matrix. The KKT conditions are given by 
\begin{equation}\label{subprob:bisection}
    \begin{aligned}
        -\frac{w_i}{u_{i}^T x_{i}} + p^{k+1} + \frac{1}{\tau_k} (x_i - x_i^k) = \lambda_i, \\
        \lambda_i^T x_i =0, \, \lambda_i \geq 0, \, x_i \geq 0.
    \end{aligned}
\end{equation}
where $\lambda_i \in \mathbb{R}^m$ is the dual variable associated with the non-negativity constraint. Define a new variable $s = u_i^Tx_i$, then, based on \eqref{subprob:bisection}, each component $x_{ij}$ can be computed as:
\begin{equation}\label{eq:bisection}
    x_{ij} = \text{proj}_{\mathbb{R}_+}\left(x_{ij}^k - \tau_k p^{k+1}_j + \frac{\tau_k w_i}{s} \right), \, \forall j \in [m].
\end{equation}  
The goal is to find a $s >0$ such that $s = \sum_{j \in [m]} u_{ij} x_{ij} = \sum_{j \in [m]} u_{ij} \cdot \text{proj}_{\mathbb{R}_+}\left(x_{ij}^k - \tau_k p^{k+1}_j + \frac{\tau_k w_i}{s} \right)$. That is, we need to solve the equation
$$
\phi(s) = s - \sum_{j \in [m]} u_{ij} \cdot \text{proj}_{\mathbb{R}_+}\left(x_{ij}^k - \tau_k p^{k+1}_j + \frac{\tau_k w_i}{s} \right) = 0.
$$
It is easy to verify that $\phi(s)$ is monotonically increasing w.r.t. $s > 0$. Therefore, the standard bisection search algorithm can be applied to efficiently identify the root of $\phi(s)$, thereby solving the KKT system~\eqref{subprob:bisection}.

\begin{algorithm}[t]
\DontPrintSemicolon
\SetAlgoLined
\SetInd{0.5em}{0.5em} 
\SetKwInput{KwInput}{Input}
\SetKwInput{KwOutput}{Output}
\SetKwInput{KwInitialize}{Initialize the inner loop}
\SetKwInput{KwRestart}{Restart the inner loop}
\SetKwFor{Repeat}{Repeat}{}{end} 
\SetKwFor{ForEach}{for each}{}{end} 

\KwInput{initial point $(x^{0, 0}, p^{0,0})$, step-size $\{(\sigma_k, \tau_k)\}$, restart frequency $K$.}
\Repeat{until $(x^{n, 0}, p^{n,0})$ converges}{
\KwInitialize{$(\overline{x}^{n,0}, \overline{p}^{n, 0}) \leftarrow (x^{n, 0},p^{n,0})$}
    \ForEach{$k = 0, \ldots, K - 1$}{
        \Indp 
        Update $p^{n, k+1}$ according to \eqref{update:py} \\
        Parallel update each row of $x^{n, k+1}$ via Algorithm \ref{algo:search} \\
        $(\overline{x}^{n, k+1},\overline{p}^{n, k+1} ) = \frac{k}{k+1}(\overline{x}^{n, k},\overline{p}^{n, k})+ \frac{1}{k+1}(x^{n, k+1},p^{n, k+1})$\\
        \Indm 
    }
    \KwRestart{$(x^{n+1, 0},p^{n+1, 0}) \leftarrow (\overline{x}^{n, K},\overline{p}^{n, K})$, $n \leftarrow n+1 $.}
}
\KwOutput{$(x^{n, 0}, p^{n,0})$}
\caption{Restarted PDHCG}
\label{algo:PDHCG}
\end{algorithm}

However, a more effective update strategy is available. Let $x_i^\star$ denote the optimal solution to~\eqref{subprob:bisection}, and define $s^\star = u_i^T x_i^\star$. Given a trial value $s = u_i^T x_i$, we can compute $\tilde{x}_{ij}$ using the update rule in~\eqref{eq:bisection}, and obtain the corresponding value $\tilde{s} = u_i^T \tilde{x}_i$. If $s < s^\star$, then $\tilde{x}_{ij} > x_{ij}^\star$, which implies $\tilde{s} > s^\star > s$. Conversely, if $s > s^\star$, then $\tilde{x}_{ij} < x_{ij}^\star$, leading to $\tilde{s} < s^\star < s$. Therefore, the values of $s$ and $\tilde{s}$ naturally form a bracketing interval that provides lower and upper bounds for $s^\star$.

The bisection search procedure is outlined in Algorithm~\ref{algo:search}. At the beginning of the algorithm, we compute an initial trial value $s$ based on the previous iterate $x_i^k$, and use it to obtain an updated value $\tilde{s}$. As established above, the pair $(s, \tilde{s})$ forms valid lower and upper bounds for the optimal value $s^\star$. At each iteration of the bisection algorithm, a new trial value is chosen as the midpoint $s = (L + U)/2$ given the current lower and upper bounds $L$ and $U$, respectively. A new value $\tilde{s}$ is then computed using the update rule. Since both $U$ and $\max\{s, \tilde{s}\}$ are valid upper bounds, we update the upper bound by taking $\min\{U, \max\{s, \tilde{s}\}\}$. Similarly, both $L$ and $\min\{s, \tilde{s}\}$ are valid lower bounds, so we update the lower bound as $\max\{L, \min\{s, \tilde{s}\}\}$.
\begin{remark}
    In the worst case, where $\tilde{s}$ lies outside the current interval $[L, U]$, the method behaves like standard bisection, halving the interval at each step. In the better case where $\tilde{s} \in [L, U]$, the bounds $(s, \tilde{s})$ form a tighter bracketing interval for $s^\star$, potentially accelerating convergence. Therefore, Algorithm~\ref{algo:search} can be viewed as an enhanced bisection method that adaptively exploits intermediate values to tighten bounds more efficiently than standard bisection.
\end{remark}

\begin{algorithm}[t]
\DontPrintSemicolon
\SetAlgoLined
\SetInd{0.5em}{0.5em} 
\SetKwInput{KwInput}{Input}
\SetKwInput{KwOutput}{Output}
\SetKwInput{KwInitialize}{Initialize the inner loop}
\SetKwInput{KwRestart}{Restart the inner loop}
\SetKwFor{Repeat}{Repeat}{}{end} 
\SetKwFor{ForEach}{for each}{}{end} 

\KwInput{parameters $x_i^k, p^{k+1}, \tau_k$, and tolerance $\varepsilon$.}
Compute a trial value $s = u_i^T x_i^k$\\
Compute $\tilde{x}_i$ according to \eqref{eq:bisection} and set $\tilde{s} = u^T_i \tilde{x}_i$ \\
Let $U = \max\{s, \tilde{s}\}$ and $L = \min\{s, \tilde{s}\}$\\
\Repeat{until $U - L \leq \varepsilon$ converges}{
Set $s = (U + L) / 2$, compute $\tilde{x}_i$ according to \eqref{eq:bisection}, and set $\tilde{s} = u^T_i \tilde{x}_i$ \\
Update $U =\min\{U, \max\{s, \tilde{s}\}\}$ and $L =\max\{L, \min\{s, \tilde{s}\}\}$
}
\KwOutput{$\tilde{x}_i$}
\caption{Bisection Search}
\label{algo:search}
\end{algorithm}

To summarize, our extensive numerical experiments in Section \ref{sec:numerical} show that the proposed PDHCG algorithm consistently requires fewer iterations than the classical PDHG method, indicating faster convergence. However, achieving high-accuracy solutions via bisection search can be time-consuming, limiting overall runtime improvement. To address this, we introduce a 32-section search algorithm tailored to GPU architecture, which significantly enhances practical efficiency and might be of independent interest.
\subsubsection{32-Section Search} 
In our GPU implementation, we adopt a 32-section search strategy in place of the standard bisection search described in Algorithm~\ref{algo:search}. Specifically, given an upper bound $U$ and a lower bound $L$, we evaluate 32 candidate points in the interval $[L, U]$, defined as: 
$$
s_l = \left(1 - \frac{l}{33}\right) L +  \frac{l}{33}U, \, l = 1, 2, \dots, 32.
$$
For each candidate $s_l$, we compute the corresponding $\tilde{x}_i^l$ in parallel according to \eqref{eq:bisection}, and then evaluate $\tilde{s}_l = u^T_i \tilde{x}_i^l$. The bounds $U$ and $L$ are then updated as follows:
\begin{equation*}
    \begin{aligned}
        U &=\min \left\{U, \max\{s_1, \tilde{s}_1\}, \max\{s_2, \tilde{s}_2\},\cdots, \max\{s_{32}, \tilde{s}_{32}\} \right\}, \\
        L &=\max\{L, \min\{s_1, \tilde{s}_1\}, \min\{s_2, \tilde{s}_2\},\cdots, \min\{s_{32}, \tilde{s}_{32}\}\}.
    \end{aligned}
\end{equation*}
The 32-section search significantly outperforms standard bisection in GPU implementations for the following three reasons:
\begin{itemize}
    \item \textbf{Warp-Level Synchronization Overhead.}
In GPU architectures, a warp consists of 32 threads that execute instructions in lockstep, meaning all threads in a warp must wait for the longest-running thread to complete its operation before the warp can proceed. When the bisection search method is executed independently by each thread within a warp, the number of iterations can vary significantly across threads, leading to inefficiencies due to thread divergence. In contrast, although the 32-section search method employs more threads overall, it exhibits lower variance in per-thread workload within a warp and thus incurs less synchronization delay from warp-level execution constraints.
\item \textbf{Memory/Compute Bandwidth Balance.}
The balance between memory access and computational workload plays a critical role in determining GPU performance. During each row update, GPU kernels must frequently access memory to retrieve required data. In the 32-section search method, all threads within a block operate on a shared set of data, enabling effective utilization of shared memory and minimizing redundant memory accesses. In contrast, the binary search method requires each thread to access distinct data during its update process, leading to substantially higher memory bandwidth consumption.
\item \textbf{Algorithmic vs. Computational Efficiency Trade-off.}
Higher-division search methods, such as 64-section or 128-section search, involve more threads per iteration but offer diminishing returns in terms of reducing the number of required iterations. As the division level increases, algorithmic efficiency degrades significantly due to the limited marginal reduction in iterations, while the computational gains become negligible. Consequently, the overall performance of such methods tends to worsen.
\end{itemize}
\begin{table}[htbp]
  \centering
  \renewcommand{\arraystretch}{1.2}
  \caption{Runtime (ms) in each iteration of different sections}
  \begin{tabular}{p{1cm}p{1cm}|cp{1cm}p{1cm}p{1cm}p{1cm}p{1cm}p{1cm}}
    \toprule
    n & m     & Sparsity \; \; & 2     & 4    & 8     & 16    & 32    & 64 \\
    \midrule
    \multirow{3}{*}{$10^5$} &  \multirow{3}{*}{4000}  & 0.5   & 247.3  & 163.2  & 131.0  & 113.3  & \textbf{95.9}  & 172.8  \\
    \cline{3-9}
    & & 0.2   & 93.6  & 67.1  & 58.0  & 50.1  & \textbf{43.3}  & 64.6  \\
    \cline{3-9}                         
    & & 0.1   & 45.4  & 30.9  & 27.2 & 23.4  & \textbf{19.4}  & 34.1  \\
    \midrule
    \multirow{3}{*}{$10^6$} & \multirow{3}{*}{4000} & 0.05  & 188.0  & 130.4  & 110.5  & 99.0  & \textbf{84.3}  & 155.9  \\
    \cline{3-9}
     & & 0.02  & 74.4  & 50.6  & 43.9  & 39.6  & \textbf{36.1}  & 68.8  \\
     \cline{3-9}
    & & 0.01  & 42.7  & 34.1  & 31.1  & 27.6  & \textbf{26.3}  & 39.8  \\
    \bottomrule
  \end{tabular}
  \label{diff_division}
\end{table}

Taking these trade-offs into account, we adopt the 32-section search method in our implementation, as it offers a balanced compromise between algorithmic and computational efficiency across a wide range of problem settings. Table~\ref{diff_division} reports the average iteration runtime of various section search algorithms applied to the Fisher market model with synthetic data (see Section~\ref{sec:numerical} for details). As shown, the 32-section search consistently yields the lowest total runtime across all settings, demonstrating its practical advantage over other division strategies.

\section{Convergence Analysis}\label{sec:convergence}
In this section, we present the computational guarantees for the PDHCG algorithm, as described in Algorithm~\ref{algo:PDHCG}. It is important to note that the analysis and results developed here are directly applicable to the PDHG algorithm in Algorithm~\ref{algo:PDHG} as well. We begin by introducing several progress metrics that are fundamental to the subsequent analysis. Let $\mathcal{Z} = \mathbb{R}_+^{n \times m} \times \mathbb{R}^m$, and define $z = (x, p) \in \mathcal{Z}$ as the pair of primal and dual variables. Furthermore, let $\mathcal{Z}^\star \subseteq \mathcal{Z}$ denote the set of optimal saddle points of the minimax problem~\eqref{pdhcg:primal-dual-formulation}.
\begin{definition}
    For any $z =(x, p)$ and $ \hat{z} =(\hat{x}, \hat{p})$, we define 
    \begin{align}
        Q(z, \hat{z}) \coloneqq \mathcal{L}(x, \hat{p}) - \mathcal{L}(\hat{x}, p).
    \end{align}
    We call $\max_{\hat{z} \in \mathcal{Z}} Q(z, \hat{z})$ the duality gap at $z$.
\end{definition}
The duality gap is a natural and widely used progress metric for analyzing primal–dual algorithms, and has appeared extensively in the literature. However, as noted in~\cite{lu2023practical}, it often suffers from divergence issues due to the potentially unbounded nature of the constraint set. To address this, a smoothed duality gap was proposed in~\cite{fercoq2022quadratic}, which mitigates this issue by introducing a penalization term based on the distance between $\hat{z}$ and a reference point $\dot{z}$. This smoothed metric is defined as follows:
\begin{definition}\label{defi:smoothedDG}
For any $\xi >0$ and $z, \dot{z} \in \mathcal{Z}$, we define 
    the smoothed duality gap at $z$ centered at $\dot{z}$ as
    \begin{align}
        G_\xi(z; \dot{z}) = \max_{\hat{z}\in \mathcal{Z}} \left\{ Q(z, \hat{z}) - \frac{\xi}{2}\|\hat{z} - \dot{z}\|^2 \right\}.
    \end{align}    
\end{definition}
Furthermore,~\cite{fercoq2022quadratic} establishes a quadratic growth condition for the smoothed duality gap, which plays a central role in their analysis of the linear convergence of the PDHG algorithm. In this work, we adopt this concept to analyze the convergence behavior of Algorithm~\ref{algo:PDHCG}:
\begin{definition}\label{def:SQG}
    Suppose $\xi > 0$. We say that the smoothed duality gap satisfies quadratic growth on a bounded set $S$ if there exists some $\alpha_{\xi} > 0$ such that, it holds for all $z^\star \in \mathcal{Z}^\star$ and any $z \in S$ that
    \begin{equation}\label{ineq:quadratic-growth}
        G_\xi(z; z^\star) \geq \alpha_{\xi} \textrm{dist}(z, \mathcal{Z}^\star)^2. 
    \end{equation}
\end{definition}
It turns out that the smoothed duality gap associated with both problems \eqref{eq:primal-dual-formulation} and \eqref{pdhcg:primal-dual-formulation} satisfies the quadratic growth property on any bounded set. A formal characterization of the corresponding growth parameter $\alpha_{\xi}$ is provided in Section \ref{subsec:QG}.
\subsection{Convergence Analysis}
We begin by presenting a decay lemma for the primal-dual hybrid gradient method, originally established in~\cite{chambolle2011first}. Part (1) of the lemma shows that the iterates remain bounded, while part (2) provides a telescoping bound on the duality gap $Q(\overline{z}^{n, K}, \hat{z})$ for each iteration. Throughout this section, we focus on the constant stepsize $\tau_k = \tau$ and $\sigma_k = \sigma$, and define $L = \max\{\|x^T \bm{1}_n\|_2 : \|x\|_F\leq 1\} = \sqrt{n}$. .

\begin{lemma}[{\cite[Theorem 1]{chambolle2011first}}] \label{lemma:decay} For any $n \geq 0$, let $z^{n, k} = (x^{n, k}, p^{n, k}), k = 1, \cdots, K$ be the sequence generated by Algorithm \ref{algo:PDHCG} in $n$-th outer iteration. Assume that the stepsize $\sigma\tau L^2 < 1$, then we have
\begin{itemize}
    \item[(1)] $\{(x^{n, k}, p^{n, k})\}$ remains bounded, indeed, for any $k$ and any $z^\star \in \mathcal{Z}^\star$:
    \begin{align}
        \frac{\|x^{n, k} - x^\star\|^2}{2\tau} + \frac{\|p^{n, k} - p^\star\|^2}{2\sigma} \leq \frac{1}{1 - \sigma\tau L^2} \left( \frac{\|x^{n, 0} - x^\star\|^2}{2\tau} + \frac{\|p^{n, 0} - p^\star\|^2}{2\sigma} \right),
    \end{align}
    \item[(2)] for the average $(\overline{x}^{n,K}, \overline{p}^{n,K})  = \left(\frac{1}{K}\sum_{k=1}^K x^{n, k}, \frac{1}{K}\sum_{k=1}^K p^{n, k}\right)$ and any $\hat{z} \in \mathcal{Z}$:
    \begin{align*}
        Q(\overline{z}^{n, K}, \hat{z}) \leq \frac{1}{K} \left( \frac{\|x^{n, 0} - \hat{x}\|^2}{2\tau} + \frac{\|p^{n, 0} - \hat{p}\|^2}{2\sigma} \right).
    \end{align*}
\end{itemize}
\end{lemma}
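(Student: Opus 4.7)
The plan is to follow the primal-dual analysis of Chambolle and Pock \cite{chambolle2011first}: derive a per-iteration three-point inequality, telescope it over the inner loop, and absorb the residual coupling terms using the stepsize bound $\sigma\tau L^2 < 1$. Writing the saddle-point objective \eqref{pdhcg:primal-dual-formulation} in the standard form $\mathcal L(x,p) = G(x) + \langle Ax, p\rangle - \langle \bm{1}_m, p\rangle$, where $G$ bundles the log-barrier and the non-negativity constraint on $x$ and $A$ is the column-sum operator with $\|A\| = L = \sqrt n$, places the problem in the exact setting of \cite{chambolle2011first}.

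The first step is to invoke the three-point inequality of proximal maps for each of the two subproblems in \eqref{eq:update-rule}. The dual subproblem in $p$ is an unconstrained strongly concave quadratic, and the 32-section search of Algorithm~\ref{algo:search} returns the exact minimizer of the strongly convex primal subproblem in $x$, so both subproblems satisfy the sharp three-point bound. Adding them and using the extrapolation $\bar x^{n,k} = 2x^{n,k}-x^{n,k-1}$ in the dual update, the linear-in-$A$ terms rearrange into the telescopable pair $-\langle A(x^{n,k+1}-x^{n,k}), p^{n,k+1}-\hat p\rangle + \langle A(x^{n,k}-x^{n,k-1}), p^{n,k+1}-\hat p\rangle$, yielding the per-iteration inequality
\begin{align*}
Q(z^{n,k+1}, \hat z) \leq\;& \frac{\|x^{n,k}-\hat x\|^2 - \|x^{n,k+1}-\hat x\|^2}{2\tau} + \frac{\|p^{n,k}-\hat p\|^2 - \|p^{n,k+1}-\hat p\|^2}{2\sigma} \\
&- \frac{\|x^{n,k+1}-x^{n,k}\|^2}{2\tau} - \frac{\|p^{n,k+1}-p^{n,k}\|^2}{2\sigma} \\
&- \langle A(x^{n,k+1}-x^{n,k}), p^{n,k+1}-\hat p\rangle + \langle A(x^{n,k}-x^{n,k-1}), p^{n,k+1}-\hat p\rangle.
\end{align*}

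Summing over $k = 0, \dots, K-1$ (with $x^{n,-1} := x^{n,0}$), the coupling sums partially telescope and the residual consists of a boundary term plus chained differences of consecutive iterates. Bounding each by Young's inequality with weight $\gamma = L\tau$ matches the available negative movement term $-\|x^{n,K}-x^{n,K-1}\|^2/(2\tau)$ exactly and produces a leftover penalty of $\frac{L^2\tau}{2}\|p^{n,K}-\hat p\|^2 = \frac{\sigma\tau L^2}{2\sigma}\|p^{n,K}-\hat p\|^2$. Combining this with the $-\|p^{n,K}-\hat p\|^2/(2\sigma)$ term from telescoping converts it to $-(1-\sigma\tau L^2)\|p^{n,K}-\hat p\|^2/(2\sigma)$, which is exactly the source of the factor $1/(1-\sigma\tau L^2)$ appearing in part~(1).

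For part~(2) I divide by $K$ and apply Jensen's inequality to pass from $\tfrac{1}{K}\sum_k Q(z^{n,k+1},\hat z)$ to $Q(\overline z^{n,K},\hat z)$; this is valid because $\mathcal L(\cdot,\hat p)$ is convex and $\mathcal L(\hat x,\cdot)$ is concave, and the absorption step is unnecessary here since the negative movement terms can simply be dropped. For part~(1) I set $\hat z = z^\star$, use $Q(z^{n,k+1}, z^\star)\geq 0$ for every $k$ because $z^\star$ is a saddle point, and keep only the terminal distance-to-$z^\star$ terms after absorption. The main delicate point, and really the only obstacle, is the Young's-inequality calibration: any other choice of weight either leaves an unabsorbed $\|x\|$-movement term or degrades the $\|p\|$-distance more severely, and this calibration is precisely where the stepsize condition $\sigma\tau L^2 < 1$ is both needed and tight.
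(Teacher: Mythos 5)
The paper offers no proof of this lemma at all---it is imported verbatim from Theorem~1 of \cite{chambolle2011first}---and your reconstruction is precisely the standard argument from that reference (per-iteration three-point inequalities for the two proximal subproblems, telescoping the extrapolated coupling terms, and a Young's-inequality calibration under $\sigma\tau L^2<1$ that yields both the $1/(1-\sigma\tau L^2)$ factor in part~(1) and the clean ergodic bound in part~(2)), so it matches the intended derivation. One small correction: in part~(2) the residual coupling terms of the form $\langle A(x^{n,k}-x^{n,k-1}),\,p^{n,k+1}-p^{n,k}\rangle$ are not sign-definite and cannot ``simply be dropped''; the same absorption against the movement terms as in part~(1) is still required, after which the leftover $-(1-\sigma\tau L^2)\|p^{n,K}-\hat p\|^2/(2\sigma)$ is nonpositive and may be discarded.
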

\begin{remark}
    To ensure the validity of the analysis, we assume that the subproblem with respect to $x$ is solved exactly so that the KKT conditions in~\eqref{subprob:bisection} are satisfied. In practice, this assumption is well justified: owing to the linear convergence of the bisection (or 32-section) search and its highly efficient GPU implementation, the subproblem can be solved to very high precision (e.g., $10^{-8}$) without introducing noticeable error or affecting overall convergence performance.
\end{remark}
The next lemma establishes the sublinear convergence rate $O(1/ K)$ of the PDHCG inner iterates with respect to the primal-dual
gap $G_\xi(\overline{z}^{n, K}; \dot{z})$. 
\begin{lemma}\label{lem:SQG:decay}
    By choosing $\sigma = \tau = \frac{1}{2L}$ and $K \geq 4L / \xi$, we have
    \begin{itemize}
        \item[(1)] for any $z^\star \in \mathcal{Z}^\star$, 
    \begin{equation}\label{ineq:dist-upperbound}
        \|\overline{z}^{n, K} - z^\star\| \leq 2 \|z^{n, 0} - z^\star\|;
    \end{equation}
    \item[(2)] for any $\dot{z} \in \mathcal{Z}$,
    \begin{align}
        G_\xi(\overline{z}^{n, K}; \dot{z}) \leq \frac{2L}{K} \|z^{n, 0} - \dot{z}\|^2.
    \end{align}
    \end{itemize}
\end{lemma}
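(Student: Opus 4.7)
The plan is to deduce both parts directly from Lemma~\ref{lemma:decay} after plugging in $\sigma = \tau = \tfrac{1}{2L}$. This choice satisfies $\sigma\tau L^2 = \tfrac14 < 1$, so the hypothesis of Lemma~\ref{lemma:decay} holds, and the weighted norm $\tfrac{\|x\|^2}{2\tau} + \tfrac{\|p\|^2}{2\sigma}$ collapses to $L\|z\|^2$. I expect the main work to be purely algebraic bookkeeping; there is no genuine obstacle.

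For part (1), I would apply Lemma~\ref{lemma:decay}(1) with the above stepsizes, so that $\tfrac{1}{1-\sigma\tau L^2} = \tfrac{4}{3}$ and the inequality reduces to $\|z^{n,k} - z^\star\|^2 \leq \tfrac{4}{3}\|z^{n,0} - z^\star\|^2$, uniformly in $k$. Since the running average $\overline{z}^{n,K} = \tfrac{1}{K}\sum_{k=1}^{K} z^{n,k}$ (as verified from the update rule in Algorithm~\ref{algo:PDHCG}) is a convex combination, Jensen's inequality applied to $\|\cdot - z^\star\|^2$ gives $\|\overline{z}^{n,K} - z^\star\|^2 \leq \tfrac{1}{K}\sum_{k=1}^{K}\|z^{n,k}-z^\star\|^2 \leq \tfrac{4}{3}\|z^{n,0} - z^\star\|^2$. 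Taking square roots and bounding $\sqrt{4/3} \leq 2$ yields the desired estimate.

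For part (2), I would start from Lemma~\ref{lemma:decay}(2), which under the chosen stepsizes reads $Q(\overline{z}^{n,K},\hat{z}) \leq \tfrac{L}{K}\|z^{n,0}-\hat{z}\|^2$. Applying the two-point inequality $\|z^{n,0}-\hat{z}\|^2 \leq 2\|z^{n,0}-\dot{z}\|^2 + 2\|\hat{z}-\dot{z}\|^2$ and subtracting the smoothing term gives
\begin{equation*}
Q(\overline{z}^{n,K},\hat{z}) - \tfrac{\xi}{2}\|\hat{z}-\dot{z}\|^2 \leq \tfrac{2L}{K}\|z^{n,0}-\dot{z}\|^2 + \Bigl(\tfrac{2L}{K} - \tfrac{\xi}{2}\Bigr)\|\hat{z}-\dot{z}\|^2,
\end{equation*}
and this is precisely why the hypothesis $K \geq 4L/\xi$ is chosen: it forces the coefficient of $\|\hat{z}-\dot{z}\|^2$ to be nonpositive, so taking the maximum over $\hat{z}\in\mathcal{Z}$ keeps only the first term. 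This delivers $G_\xi(\overline{z}^{n,K};\dot{z}) \leq \tfrac{2L}{K}\|z^{n,0}-\dot{z}\|^2$, completing the proof. The only delicate point to highlight is the factor $2$ in the two-point inequality, which is the source of the constant $4$ in the threshold $K \geq 4L/\xi$.
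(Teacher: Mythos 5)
Your proof is correct and follows essentially the same route as the paper: part (1) combines Lemma~\ref{lemma:decay}(1) with an averaging inequality (you use Jensen on $\|\cdot\|^2$ where the paper uses the triangle inequality, a negligible difference), and part (2) uses the two-point inequality $\|z^{n,0}-\hat{z}\|^2 \leq 2\|z^{n,0}-\dot{z}\|^2 + 2\|\hat{z}-\dot{z}\|^2$ together with $K \geq 4L/\xi$ to kill the $\|\hat{z}-\dot{z}\|^2$ term before maximizing over $\hat{z}$, exactly as the paper does. Your identification of the two-point inequality as the source of the constant $4$ in the threshold matches the paper's reasoning.
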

\begin{proof}
According to the choices of stepsize $\sigma = \tau = \frac{1}{2L}$, part (1) of Lemma \ref{lemma:decay} implies that
$$
\|z^{n,k} - z^\star\|^2 \leq 4 \|z^{n,0} - z^\star\|^2
$$
holds for any $k\in [K]$ and any $z^\star \in \mathcal{Z}^\star$. Then, we have
$$
\|\overline{z}^{n,K} - z^\star\| = \left\|\frac{1}{K}\sum_{k \in [K]} z^{n,k} - z^\star \right\| \leq \frac{1}{K}\sum_{k \in [K]}\|z^{n,k} - z^\star\| \leq 2\|z^{n,0} - z^\star\|,
$$
where the first inequality holds because of the triangle inequality. So we complete the proof of part (1). Next, according to part (2) of Lemma \ref{lemma:decay}, for any $\hat{z}, \dot{z} \in \mathcal{Z}$, we know that
    \begin{align}
        Q(\overline{z}^{n, K}, \hat{z}) - \frac{\xi}{2} \|\hat{z} - \dot{z}\|^2 \leq \frac{L}{K} \|z^{n, 0} - \hat{z}\|^2 - \frac{2L}{K} \|\hat{z} - \dot{z}\|^2 \leq \frac{2L}{K} \|z^{n, 0} - \dot{z}\|^2
    \end{align}
    where the first inequality holds because we choose $K \geq 4L / \xi$ so that $\xi/2 \geq 2L/ K$, and the last inequality holds because of Cauchy–Schwarz inequality. Therefore, we have
    \begin{align*}
        G_\xi(\overline{z}^{n, K}; \dot{z}) = \max_{\hat{z}} \left\{ Q(\overline{z}^{n, K}, \hat{z}) - \frac{\xi}{2} \|\hat{z} - \dot{z}\|^2 \right\} \leq \frac{2L}{K} \|z^{n, 0} - \dot{z}\|^2.
    \end{align*}
\end{proof}
The following theorem presents our major theoretical result of Algorithm \ref{algo:PDHCG} for Fisher market model.
\begin{theorem}
     By choosing $\sigma = \tau = \frac{1}{2L}$ and $K \geq \max\left\{ \frac{4L}{\xi}, \frac{4L}{\alpha_\xi}\right\}$, we have for all $n \geq 0$ that
     \begin{align}\label{ineq:converge}
     \textrm{dist}(z^{n+1, 0}, \mathcal{Z}^\star) \leq \frac{1}{2} \textrm{dist}(z^{n, 0}, \mathcal{Z}^\star).
 \end{align}
\end{theorem}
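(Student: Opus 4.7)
The plan is to combine the two parts of Lemma~\ref{lem:SQG:decay} with the quadratic growth property of the smoothed duality gap (Definition~\ref{def:SQG}) in a classical ``restart argument'' style. Since $z^{n+1,0} = \overline{z}^{n,K}$, the goal reduces to showing a contraction of $\mathrm{dist}(\cdot, \mathcal{Z}^\star)$ between the averaged iterate and the starting point of the current outer loop.

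First, I would pick $z^\star$ to be the projection of $z^{n,0}$ onto $\mathcal{Z}^\star$, so that $\|z^{n,0} - z^\star\| = \mathrm{dist}(z^{n,0}, \mathcal{Z}^\star)$, and apply part~(2) of Lemma~\ref{lem:SQG:decay} with $\dot{z} = z^\star$ to obtain the upper bound
\begin{equation*}
G_\xi(\overline{z}^{n,K}; z^\star) \;\le\; \frac{2L}{K}\,\|z^{n,0} - z^\star\|^2 \;=\; \frac{2L}{K}\,\mathrm{dist}(z^{n,0},\mathcal{Z}^\star)^2 .
\end{equation*}
This requires $K \ge 4L/\xi$, which is ensured by the hypothesis on $K$.

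Next, to invoke quadratic growth I need $\overline{z}^{n,K}$ to lie in a set where Definition~\ref{def:SQG} applies. This is exactly what part~(1) of Lemma~\ref{lem:SQG:decay} delivers: $\|\overline{z}^{n,K} - z^\star\| \le 2\|z^{n,0}-z^\star\|$, so $\overline{z}^{n,K}$ stays inside a ball of radius $2\,\mathrm{dist}(z^{n,0},\mathcal{Z}^\star)$ centered at $z^\star$. Since the outer iterates are themselves bounded (the distance to $\mathcal{Z}^\star$ only decreases along the argument below, so inductively all iterates live in a single bounded set $S$), the quadratic growth inequality \eqref{ineq:quadratic-growth} applied at $z = \overline{z}^{n,K}$ and this choice of $z^\star$ gives
\begin{equation*}
G_\xi(\overline{z}^{n,K}; z^\star) \;\ge\; \alpha_\xi \,\mathrm{dist}(\overline{z}^{n,K},\mathcal{Z}^\star)^2 .
\end{equation*}

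Chaining the two inequalities yields
\begin{equation*}
\mathrm{dist}(\overline{z}^{n,K},\mathcal{Z}^\star)^2 \;\le\; \frac{2L}{K\,\alpha_\xi}\,\mathrm{dist}(z^{n,0},\mathcal{Z}^\star)^2,
\end{equation*}
and the hypothesis $K \ge 4L/\alpha_\xi$ makes the prefactor at most $1/2$, so taking square roots and using $z^{n+1,0} = \overline{z}^{n,K}$ gives \eqref{ineq:converge}. The main obstacle, and the reason both parts of Lemma~\ref{lem:SQG:decay} are needed simultaneously, is that quadratic growth is only a \emph{local} property guaranteed on a bounded set $S$; part~(1) is what ensures that the restarted iterates never escape $S$, so one must set up the induction on $n$ carefully (choosing $S$ to be, say, a ball of radius $2\,\mathrm{dist}(z^{0,0},\mathcal{Z}^\star)$ around $\mathcal{Z}^\star$) before invoking quadratic growth at each outer step. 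The explicit characterization of $\alpha_\xi$ promised in Section~\ref{subsec:QG} is needed only to make the rate quantitative; the contraction argument itself is purely a consequence of combining the sublinear decay of $G_\xi$ with its quadratic growth lower bound.
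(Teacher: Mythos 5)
Your proposal is correct and follows essentially the same route as the paper's proof: choose $z^\star = \mathrm{proj}_{\mathcal{Z}^\star}(z^{n,0})$, combine the $O(1/K)$ bound on $G_\xi(\overline{z}^{n,K}; z^\star)$ from part (2) of Lemma~\ref{lem:SQG:decay} with the quadratic growth lower bound at $z^{n+1,0}=\overline{z}^{n,K}$, and use part (1) together with an induction on $n$ to keep the outer iterates in the bounded set where quadratic growth is valid. You even flag the same subtlety the paper handles last, namely that the boundedness of $\{z^{n,0}\}$ must be verified by induction before quadratic growth can be invoked.
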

\begin{proof}
First of all, suppose that the sequence $\{z^{n, 0}\}$ always lies in a bounded set $S$, then we can apply the quadratic growth property stated in Definition \ref{def:SQG}. Let $z^\star = \text{proj}_{\mathcal{Z}^\star} (z^{n, 0})$, we have
\begin{align*}
    \text{dist}(z^{n+1, 0}, \mathcal{Z}^\star)^2 \leq & \frac{1}{\alpha_{\xi}} G_\xi(z^{n+1,0}; z^\star) = \frac{1}{\alpha_{\xi}} G_\xi(\overline{z}^{n,K}; z^\star) \\
    \leq & \frac{2L}{K \alpha_{\xi}} \|z^{n, 0} - z^\star \|^2 = \frac{2L}{K \alpha_{\xi}} \text{dist}(z^{n, 0}, \mathcal{Z}^\star)^2 \\
    \leq & \frac{1}{2} \text{dist}(z^{n, 0}, \mathcal{Z}^\star)
\end{align*}
where the first inequality follows from the quadratic growth property in Definition \ref{def:SQG}, the second inequality follows from Lemma \ref{lem:SQG:decay}, and the last inequality holds by our choice $K \geq 4L/\alpha_\xi$. So we complete the proof.
The remaining part is to verify the assumption that the sequence $\{z^{n, 0}\}$ remains bounded. We can show by induction that \begin{equation}\label{dist:sequence-bounded}
    \text{dist}(z^{n, 0}, \mathcal{Z}^\star) \leq 2\textrm{dist}(z^{0, 0}, \mathcal{Z}^\star), \, \forall  n \geq 0.
\end{equation}
Then, the sequence is bounded since $\mathcal{Z}^\star$ is bounded. Firstly, \eqref{dist:sequence-bounded} trivally holds for $n = 0$. Assume \eqref{dist:sequence-bounded} holds for all $n \leq N$, then \eqref{ineq:converge} also holds for all $n \leq N - 1$, which implies that $\text{dist}(z^{N, 0}, \mathcal{Z}^\star) \leq \textrm{dist}(z^{0, 0}, \mathcal{Z}^\star) / 2^{N} \leq \textrm{dist}(z^{0, 0}, \mathcal{Z}^\star)$. Then, according to \eqref{ineq:dist-upperbound} in Lemma \ref{lem:SQG:decay} with $z^\star = \text{proj}_{\mathcal{Z}^\star} (z^{N, 0})$, we have
$$\text{dist}(z^{N + 1, 0}, \mathcal{Z}^\star) \leq \| z^{N + 1, 0}- z^\star\| \leq 2 \| z^{N , 0}- z^\star\| =  2\textrm{dist}(z^{N, 0}, \mathcal{Z}^\star) \leq 2\textrm{dist}(z^{0, 0}, \mathcal{Z}^\star).
$$
By induction, we complete the proof.
\end{proof}

\subsection{Quadratic Growth}\label{subsec:QG}
The quadratic growth property~\eqref{ineq:quadratic-growth} of the smoothed duality gap is generally difficult to establish and has only been verified for a few structured problems—such as quadratic programming~\cite{lu2023practical}—typically through case-by-case analysis. Fortunately, recent work by~\cite{liu2025} demonstrates that, for general convex and smooth optimization problems with linear constraints, the quadratic growth property can be implied by an appropriate KKT error bound--also known as the metric subregularity of the KKT operator \cite{alacaoglu2022convergence,drusvyatskiy2018error,liang2016convergence}--for the original problem. Therefore, to establish the quadratic growth property for our setting, it suffices to analyze the KKT error bound for the Fisher market problem. 

\subsubsection{KKT Error Bound for PDHG} 
We need to first establish the KKT error bound for the PDHG reformulation~\eqref{Eisenberg-Gale-reformulation}, and then extend the result to derive the corresponding KKT error bound for the PDHCG formulation~\eqref{pdhcg:primal-dual-formulation}. The KKT conditions for the Fisher equilibrium problem~\eqref{Eisenberg-Gale-reformulation} are given by:
\begin{subequations}\label{eq:KKT-system}
\begin{align}
    t_i y_i - w_i = & 0, \;\;\forall i\in [n] \label{KKT-system-1}\\
    x_{ij}(p_j - u_{ij} y_i) = & 0, \;\; \forall i, j \label{KKT-system-2}\\
    p_j - u_{ij} y_i \geq & 0, \;\; \forall i, j\label{KKT-system-3}\\
    \sum_{i \in [n]} x_{ij} = & 1, \;\; \forall j \in [m] \label{KKT-system-4}\\
    t_i - \sum_{j \in [m]} u_{ij} x_{ij} = & 0, \;\; \forall i\in [n]\label{KKT-system-5}\\
    x_{ij} \geq & 0, \;\; \forall i, j \label{KKT-system-6}.
\end{align}
\end{subequations}
It is straightforward to verify that the complementary slackness condition \eqref{KKT-system-2} is equivalent to the following formulation \cite{lu2023practical}, which avoids the explicit use of quadratic terms:
\begin{equation}\label{eq:equivalent-form}
    x_{ij} - [x_{ij} - \eta(p_j - u_{ij} y_i)]_+ = 0, \text{ for any } \eta >0.
\end{equation}
This equivalence holds in the following way: if \eqref{KKT-system-2} holds, then the condition in \eqref{eq:equivalent-form} is satisfied for any $\eta > 0$; conversely, if \eqref{eq:equivalent-form} holds for some $\eta > 0$, then \eqref{KKT-system-2} must also hold. Therefore, we can replace \eqref{KKT-system-2} with \eqref{eq:equivalent-form} for some $\eta > 0$ and define the following scaled KKT residual \cite{lu2023practical}.
\begin{definition}
    For any $\xi> 0$ and any point $(x, t, p, y)$ with $x\geq 0$, we call
    \begin{equation}
        F_{\xi}(x, t, p, y) = \begin{pmatrix}
            (t_iy_i - w_i)_i \\
            \left(x_{ij} - [x_{ij} - \frac{1}{\xi}(p_j - u_{ij} y_i)]_+\right)_{ij}\\
            ([p_j - u_{ij} y_i]_-)_j \\
            (\sum_{i \in [n]} x_{ij} - 1)_j\\
    (t_i - \sum_{j \in [m]} u_{ij} x_{ij})_i
        \end{pmatrix}
    \end{equation}
the scaled KKT residual of \eqref{Eisenberg-Gale-reformulation} at $(x, t, p, y)$.
\end{definition}
The scaled KKT residual $F_{\xi}(x, t, p, y)$ serves as a proxy for assessing the optimality of a primal-dual solution $(x, t, p, y)$. Specifically, $(x, t, p, y)$ is an optimal solution to \eqref{Eisenberg-Gale-reformulation} if and only if $F_{\xi}(x, t, p, y) = 0$. Furthermore, we establish the following KKT error bound condition which quantitatively links the distance to optimality with the residual norm in any bounded set. 
\begin{proposition}\label{prop:KKT-error}
    Let $\mathcal{X}^\star$ denote the set of solutions to the KKT system \eqref{eq:KKT-system}. For any $R> 0$, there exists some $\gamma_{\xi} >0$ such that
    \begin{equation}
        \|F_{\xi}(x, t, p, y)\| \geq \gamma_{\xi} \cdot \text{dist}((x, t, p, y), \mathcal{X}^\star), \quad \forall \|(x, t, p, y)\| \leq R.
    \end{equation}
\end{proposition}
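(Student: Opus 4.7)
The strategy is to separate the single smooth nonlinearity from the piecewise linear remainder of $F_\xi$ and then patch local bounds into a global one via a compactness argument, exploiting uniqueness of the equilibrium dual variables.

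First, under Assumption~\ref{ass:positive}, classical Fisher market theory guarantees that the equilibrium price $p^\star$, the spending-rate multipliers $y^\star$, and the utilities $t^\star$ are unique across $\mathcal{X}^\star$, with $y_i^\star = w_i/t_i^\star > 0$ and bounded away from zero. Only the allocation $x^\star$ varies; it lies in the bounded polytope carved out by the market-clearing constraints and the complementarity condition at $(p^\star,y^\star)$. I would therefore write $\mathcal{X}^\star = \{(x^\star,t^\star,p^\star,y^\star) : x^\star \in X^\star\}$ and treat the dual part and the primal part separately.

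Second, every coordinate of $F_\xi$ except $t_i y_i - w_i$ is piecewise linear in $(x,t,p,y)$, because $[\cdot]_+$ and $[\cdot]_-$ are piecewise linear. On the ball $B_R = \{(x,t,p,y) : x\geq 0,\ \|(x,t,p,y)\|\leq R\}$ I would partition the domain into finitely many closed polyhedral pieces on which the signs of $x_{ij} - \tfrac{1}{\xi}(p_j - u_{ij} y_i)$ and of $p_j - u_{ij} y_i$ are fixed. On each such piece, Hoffman's lemma gives a linear error bound for the piecewise linear part of $F_\xi$; taking the minimum of the finitely many Hoffman constants across pieces produces a uniform constant $\gamma_0 > 0$ such that the distance from $(x,t,p,y)$ to the zero set of the piecewise linear part of $F_\xi$ is at most $\gamma_0^{-1}$ times its norm.

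Third, I would handle the bilinear residual $t_i y_i - w_i$. The key observation is that at any $(t^\star, y^\star)$ its Jacobian in $(t_i, y_i)$ is $(y_i^\star, t_i^\star)$, which is nonzero by Assumption~\ref{ass:positive}; combined with the piecewise linear identity $t_i - \sum_j u_{ij} x_{ij} = 0$, a first-order expansion yields, in a neighborhood of $\mathcal{X}^\star$, a bound of the form $|t_i y_i - w_i| \geq c(|t_i - t_i^\star| + |y_i - y_i^\star|) - o(1)$, where the higher-order remainder is dominated by the already-controlled piecewise linear residuals. Assembling this with the Hoffman bound from the previous paragraph and the uniqueness of $(t^\star, p^\star, y^\star)$ produces a local linear error bound near every point of $\mathcal{X}^\star \cap B_R$. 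Standard compactness of $\mathcal{X}^\star \cap B_R$ and continuity of $F_\xi$ then promote the local constant to a uniform $\gamma_\xi > 0$ valid on all of $B_R$, yielding the claim.

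The main obstacle is the bilinear term $t_i y_i - w_i$, which is the only reason the KKT system is not piecewise polyhedral (in which case Robinson's theorem would give the bound for free). Controlling this term requires both the strict positivity of $y^\star$ and $t^\star$ (secured by Assumption~\ref{ass:positive}) and a careful local-to-global compactness argument, since a generic Łojasiewicz inequality would only yield a Hölderian bound with unspecified exponent rather than the linear bound needed here.
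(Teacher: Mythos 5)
Your overall strategy coincides with the paper's: isolate the single bilinear residual $t_iy_i-w_i$, treat the remaining rows of $F_\xi$ as a piecewise linear system admitting a Hoffman-type error bound on the ball $B_R$, exploit the uniqueness of $(t^\star,p^\star,y^\star)$, and pass from a bound near $\mathcal{X}^\star$ to a bound on all of $B_R$ by compactness (the paper's last step is exactly your local-to-global argument: $\|F_\xi\|$ has a positive minimum $\vartheta$ on the compact set where $\mathrm{dist}(\cdot,\mathcal{X}^\star)$ is not small, and $\mathrm{dist}$ is bounded by $R+\|\mathcal{X}^\star\|$ there). However, the specific inequality you propose for the bilinear term, $|t_iy_i-w_i|\geq c\bigl(|t_i-t_i^\star|+|y_i-y_i^\star|\bigr)-o(1)$, is false and this step would not survive being written out. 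The linearization $y_i^\star(t_i-t_i^\star)+t_i^\star(y_i-y_i^\star)$ has a one-dimensional kernel (the tangent direction to the hyperbola $t_iy_i=w_i$ at $(t_i^\star,y_i^\star)$): taking $t_i=t_i^\star(1+s)$ and $y_i=y_i^\star/(1+s)$ gives $t_iy_i-w_i=0$ while $|t_i-t_i^\star|+|y_i-y_i^\star|=\Theta(|s|)$, so no constant $c>0$ works, even with an $O(s^2)$ slack. A nonvanishing Jacobian $(y_i^\star,t_i^\star)$ gives you an upper bound on the residual in terms of the displacement, not the coercive lower bound you need from that single component.

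The paper's device repairs exactly this point, and you could adopt it with little change to the rest of your plan: replace the row $t_iy_i-w_i$ by its linearization $y_i^\star(t_i-t_i^\star)+t_i^\star(y_i-y_i^\star)$ \emph{inside} the residual vector, obtaining a fully piecewise linear map $\tilde F_\xi$ whose zero set is still $\mathcal{X}^\star$ (this requires the small verification, done in the paper via the second-order Taylor model of $-\sum_iw_i\log t_i$ at $t^\star$ and its strong convexity in $t$, that the linearized KKT system has the same solution set). Hoffman's bound applied to the whole vector $\tilde F_\xi$ then yields $\|\tilde F_\xi\|\geq\beta_\xi\,\mathrm{dist}(\cdot,\mathcal{X}^\star)$ --- the other KKT rows eliminate the hyperbola direction that defeats the componentwise bound. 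Finally, since $\tilde F_\xi$ and $F_\xi$ differ only in that one row, and the difference of that row is the quadratic remainder $(t_i-t_i^\star)(y_i-y_i^\star)$, one gets $\bigl|\,\|\tilde F_\xi\|-\|F_\xi\|\,\bigr|\leq\tfrac12\,\mathrm{dist}(\cdot,\mathcal{X}^\star)^2$, which is absorbed when $\mathrm{dist}<\beta_\xi$; your compactness argument covers the complementary region. With that substitution your proof goes through and is essentially the paper's.
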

Before proceeding, we need to introduce some facts about the KKT system \eqref{eq:KKT-system}.
\begin{fact}\label{fact}
    Under Assumption \ref{ass:positive}, we have 
    \begin{itemize}
        \item[(1)] the optimal solution $t^\star$ is unique;
        \item[(2)] the equilibrium price $p^\star$ is unique and lies in the simplex $\{p \in \mathbb{R}^m_+ \mid \sum_{j \in [m]} p_j = \sum_{i \in [n]} w_i\}$;
        \item[(3)] $\mathcal{X}^\star$ is a bounded polyhedron.
    \end{itemize}
\end{fact}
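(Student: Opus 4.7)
The plan is to address the three parts of Fact \ref{fact} in order, using uniqueness of $t^\star$ to drive uniqueness of $y^\star$ and then of $p^\star$, and finally characterizing $\mathcal{X}^\star$ as a bounded polyhedron in the $x$-coordinate.

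For part (1), I would invoke strict convexity of the Eisenberg--Gale objective in $t$. The feasible set of \eqref{Eisenberg-Gale-reformulation} projects to a convex region in $t$-space (linear image of a polytope), and on it the map $t \mapsto -\sum_i w_i \log t_i$ is strictly convex since each $w_i > 0$. Under Assumption \ref{ass:positive}, each consumer $i$ values at least one good $j(i)$, so by placing positive mass on $x_{i,j(i)}$ one can construct a feasible point with $t > 0$ and thus finite objective; any minimizer therefore satisfies $t^\star > 0$, and strict convexity forces uniqueness.

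For part (2), with $t^\star > 0$ uniquely determined, \eqref{KKT-system-1} immediately gives $y_i^\star = w_i/t_i^\star$, also unique and positive. Next, \eqref{KKT-system-3} provides $p_j^\star \ge \max_i u_{ij} y_i^\star$ for every $j$, while \eqref{KKT-system-4} forces $x_{ij}^\star > 0$ for at least one $i$, and \eqref{KKT-system-2} applied to that index yields $p_j^\star = u_{ij} y_i^\star$. Combining,
\begin{equation*}
p_j^\star = \max_{i \in [n]} u_{ij} y_i^\star,
\end{equation*}
which is uniquely determined; Assumption \ref{ass:positive} additionally guarantees some $u_{ij} > 0$ in each column, so $p_j^\star > 0$. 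The simplex identity then follows by chaining the KKT conditions,
\begin{equation*}
\sum_{j \in [m]} p_j^\star = \sum_{j} p_j^\star \sum_{i} x_{ij}^\star = \sum_{i,j} u_{ij} y_i^\star x_{ij}^\star = \sum_{i} y_i^\star t_i^\star = \sum_{i} w_i,
\end{equation*}
using \eqref{KKT-system-4}, then \eqref{KKT-system-2} (the identity $p_j^\star x_{ij}^\star = u_{ij} y_i^\star x_{ij}^\star$ holds for all $(i,j)$, since complementary slackness makes the equation trivial when $x_{ij}^\star = 0$), then \eqref{KKT-system-5}, and finally \eqref{KKT-system-1}.

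For part (3), since $t^\star$, $y^\star$, and $p^\star$ are all singletons, $\mathcal{X}^\star$ is determined by its $x$-component, which is the set of $x \ge 0$ satisfying the linear equalities $\sum_i x_{ij}=1$ and $\sum_j u_{ij} x_{ij} = t_i^\star$ together with the linear vanishing conditions $x_{ij}=0$ whenever $p_j^\star > u_{ij} y_i^\star$ (a reformulation of \eqref{KKT-system-2} avoiding the product). This is a polyhedron, and boundedness is immediate from $x_{ij}\in[0,1]$ as forced by $\sum_i x_{ij}=1$ with $x\ge 0$. I expect the main subtlety to lie in part (2): the inequality $p_j^\star \ge \max_i u_{ij} y_i^\star$ is immediate from \eqref{KKT-system-3}, but upgrading it to equality requires market clearing together with Assumption \ref{ass:positive} to guarantee at least one active buyer of each good, so that complementary slackness can be invoked to realize the maximum at an optimal solution.
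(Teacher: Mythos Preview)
Your proof is correct and follows essentially the same structure as the paper: strict convexity for part (1), the same KKT chain \eqref{KKT-system-4}--\eqref{KKT-system-2}--\eqref{KKT-system-5}--\eqref{KKT-system-1} for the simplex identity in part (2), and the polyhedral description in $x$ once $(t^\star,y^\star,p^\star)$ are pinned down for part (3). The one substantive difference is in the uniqueness of $p^\star$: the paper simply cites \cite[Corollary~2]{ye2008path}, whereas you derive the explicit formula $p_j^\star=\max_{i\in[n]} u_{ij}y_i^\star$ directly from the KKT system (inequality from \eqref{KKT-system-3}, equality from market clearing \eqref{KKT-system-4} plus complementary slackness \eqref{KKT-system-2}). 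Your route is more self-contained and also makes positivity $p_j^\star>0$ transparent from Assumption~\ref{ass:positive}; the paper's route is shorter but relies on an external result.
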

\begin{proof}
    For part (1), we prove this by contradiction. Suppose there exists two distinct optimal solutions $(t^\star, x^\star)$ and $(\tilde{t}^\star, \tilde{x}^\star)$ such that $t^\star \neq \tilde{t}^\star$. Since the feasible set is convex, their average $(\frac{t^\star + \tilde{t}^\star}{2}, \frac{x^\star + \tilde{x}^\star}{2})$ must also be a feasible solution. However, the objective function $ f(t) = \sum_{i \in [n]} -w_i \log t_i$ is strictly convex, which implies that  
    $$
    f\left(\frac{t^\star + \tilde{t}^\star}{2}\right) < \frac{f(t^\star) + f(\tilde{t}^\star)}{2}. 
    $$
    This contradicts the fact that both $\tilde{t}^\star$ and $\tilde{t}^\star$ are optimal, since their average would yield a strictly smaller objective value. Therefore, the optimal solution $t^\star$ must be unique. 

    For part (2), the uniqueness is a direct result of \cite[Corollary 2]{ye2008path}. Besides, based on the KKT system \eqref{eq:KKT-system}, we know that
\begin{equation}\label{eq:sum-pj}
    \sum_{j \in [m]} p^\star_j = \sum_{j \in [m]}  \sum_{i \in [n]} p^\star_j x_{ij} = \sum_{j \in [m]}  \sum_{i \in [n]} u_{ij}y_i x_{ij} = \sum_{i \in [n]} y_i t_i = \sum_{i \in [n]} w_i
\end{equation}
where the first, second, third, and the last equality hold because of \eqref{KKT-system-4}, \eqref{KKT-system-2}, \eqref{KKT-system-5}, and \eqref{KKT-system-1}, respectively. While the non-negativity of $p^\star$ comes from \eqref{KKT-system-3}.

For part (3), since $t^\star$, $y^\star$ and $p^\star$ are unique, the system \eqref{eq:KKT-system} implies that $\mathcal{X}^\star$ is a polyhedron w.r.t. $x$ and thus bounded.
\end{proof}
Now, we are ready to prove Proposition \ref{prop:KKT-error}. 
\begin{proof}[Proof of Proposition \ref{prop:KKT-error}]
    We first consider the following modified problem of \eqref{Eisenberg-Gale-reformulation}, where the objective function is replaced by the second-order Taylor expansion of $ f(t) = \sum_{i \in [n]} -w_i \log t_i$ at $t^\star$:
\begin{align}\label{Equivalent-form}
    \begin{split}
        \min_{x, t} \quad &  \sum_{i \in [n]} -w_i \log t_i^\star - \frac{w_i}{t_i^\star}(t_i - t_i^\star) + \frac{w_i}{2t_i^{\star 2}}(t_i - t_i^\star)^2  \\
        \text{s.t.} \quad & \sum_{i \in [n]} x_{ij} = 1, \, \forall j \in [m] \\
        & t_i - \sum_{j \in [m]} u_{ij} x_{ij}=0, \, \forall i \in [n] \\
        & t_i, x_{ij} \geq 0, \, \forall i,j.
    \end{split}
\end{align}
The KKT system to the problem \eqref{Equivalent-form} is same as \eqref{eq:KKT-system} with replacing $t_iy_i = w_i$ by 
\begin{equation*}
    -\frac{w_i}{t_i^\star} +\frac{w_i}{t_i^{\star 2}}(t_i - t^\star_i) + y_i = 0, \; \Leftrightarrow \; y_i^\star(t_i - t^\star_i) + t_i^\star(y_i - y^\star_i) = 0
\end{equation*}
where $y^\star_i = w_i / t^\star_i $ denote the optimal dual solution according to \eqref{KKT-system-1}. It is easy to see that $t^\star$ is also a optimal solution to the modified problem \eqref{Equivalent-form} by checking the KKT conditions, and is also unique since the objective is strongly convex regarding $t$. Thus, we know that $\mathcal{X}^\star$ is also the set of solution to the KKT system of  \eqref{Equivalent-form}. Then, we can define the modified KKT residual as
\begin{equation}
    \tilde{F}_{\xi}(x, t, p, y) = \begin{pmatrix}
            (y_i^\star(t_i - t^\star_i) + t_i^\star(y_i - y^\star_i))_i \\
            \left(x_{ij} - [x_{ij} - \eta(p_j - u_{ij} y_i)]_+\right)_{ij}\\
            ([p_j - u_{ij} y_i]_-)_{ij} \\
            (\sum_{i \in [n]} x_{ij} - 1)_j\\
    (t_i - \sum_{j \in [m]} u_{ij} x_{ij})_i
        \end{pmatrix}.
\end{equation}
Notice that every entry in $\tilde{F}_{\xi}(x, t, p, y)$ is a piecewise linear function in $(x, t, p, y)$, thus the $l_2$ norm of $\tilde{F}_{\xi}(x, t, p, y)$ is a sharp function, name, there exists some $\beta_{\xi} >0$ such that it holds for any $(x, t, p, y)$ within the bounded set $S = \{(x, t, p, y): \|(x, t, p, y)\| \leq R\}$ that
\begin{equation}\label{ineq:kkt-tilde}
        \|\tilde{F}_{\xi}(x, t, p, y)\| \geq \beta_{\xi} \cdot \text{dist}((x, t, p, y), \mathcal{X}^\star).
    \end{equation}
Finally, it is easy to see that
$$
y_i^\star(t_i - t^\star_i) + t_i^\star(y_i - y^\star_i) = t_i y_i - w_i - (t_i - t^\star_i)(y_i - y^\star_i)
$$
which implies that
\begin{equation*}
    \begin{aligned}
        \left| \|y_i^\star(t_i - t^\star_i) + t_i^\star(y_i - y^\star_i)\| - \|t_i y_i - w_i\| \right| \leq   \|(t_i - t^\star_i)(y_i - y^\star_i)\| 
        \leq \frac{1}{2}(\|t_i - t^\star_i\|^2 + \|y_i - y^\star_i\|^2) 
        \leq \frac{1}{2} \text{dist}((x, t, p, y), \mathcal{X}^\star)^2.
    \end{aligned}
\end{equation*}
Using the fact that $\sqrt{a^2 + b^2} - \sqrt{a'^2 + b^2} \leq |a - a'|$, we have
$$
\|\tilde{F}_{\xi}(x, t, p, y)\| - \|F_{\xi}(x, t, p, y)\| \leq |\|y_i^\star(t_i - t^\star_i) + t_i^\star(y_i - y^\star_i)\| - \|t_i y_i - w_i\| |\leq \frac{1}{2} \text{dist}((x, t, p, y), \mathcal{X}^\star)^2.
$$
Combining this with \eqref{ineq:kkt-tilde}, we get
\begin{equation}\label{ineq:dist:beta}
        \|F_{\xi}(x, t, p, y)\| \geq \beta_{\xi} \text{dist}((x, t, p, y), \mathcal{X}^\star) - \frac{1}{2}\text{dist}((x, t, p, y), \mathcal{X}^\star)^2 \geq \frac{\beta_{\xi}}{2} \text{dist}((x, t, p, y), \mathcal{X}^\star)
\end{equation}
where the last inequality hold if $\text{dist}((x, t, p, y), \mathcal{X}^\star) < \beta_{\xi}$. Besides, consider the complementary region  $\overline{S} = \{(x, t, p, y) \in S \mid \text{dist}((x, t, p, y), \mathcal{X}^\star) \geq \beta_{\xi}\}$, since $\overline{S}$ is compact and $\|F_{\xi}(x, t, p, y)\|$ is strictly positive over the region, there exists a constant $\vartheta >0$ such that
\begin{equation}\label{ineq:dist:theta}
    \|F_{\xi}(x, t, p, y)\| \geq \vartheta \geq \frac{\vartheta}{R + \|\mathcal{X}^\star\|} \text{dist}((x, t, p, y), \mathcal{X}^\star), \quad \forall (x, t, p, y) \in \overline{S},
\end{equation}
where $\|\mathcal{X}^\star\|$ denote the largest norm of the optimal solution, which is finite since $\mathcal{X}^\star$ is bounded. Combining inequalities \eqref{ineq:dist:beta} and \eqref{ineq:dist:theta}, we conclude that for any $(x, t, p, y)$ with $\|(x, t, p, y)\| \leq R$, the following holds:
$$
\|F_{\xi}(x, t, p, y)\| \geq \min\left\{  \frac{\vartheta}{R + \|\mathcal{X}^\star\|}, \frac{\beta_{\xi}}{2} \right\} \text{dist}((x, t, p, y), \mathcal{X}^\star).
$$
This completes the proof.
\end{proof}

\subsubsection{KKT Error Bound for PDHCG} 
The KKT conditions for the Fisher equilibrium problem~\eqref{pdhcg:primal-dual-formulation} are given as follows:
\begin{subequations}\label{eq:KKT-system-pdhcg}
\begin{align}
    x_{ij} \left(p_j - \frac{u_{ij} w_i}{\sum_{j \in [m]} u_{ij} x_{ij}} \right) = & 0, \;\; \forall i, j \label{KKT-system-6}\\
    p_j - \frac{u_{ij} w_i}{\sum_{j \in [m]} u_{ij} x_{ij}} \geq & 0, \;\; \forall i, j\label{KKT-system-7}\\
    \sum_{i \in [n]} x_{ij} = & 1, \;\; \forall j \in [m] \label{KKT-system-8}\\
    x_{ij} \geq & 0, \;\; \forall i, j \label{KKT-system-9}.
\end{align}
\end{subequations}
This system is equivalent to \eqref{eq:KKT-system}, with $t_i$ and $y_i$ substituted using the relations $t_i = \sum_{j \in [m]} u_{ij} x_{ij}$ and $y_i = w_i / t_i$. Similar to \eqref{eq:equivalent-form}, we can replace the complementary slackness condition \eqref{KKT-system-6} with the following condition to avoid explicit use of quadratic terms:
\begin{equation}\label{eq:equivalent-form-pdhcg}
    x_{ij} - \left[x_{ij} - \eta \left(p_j - \frac{u_{ij} w_i}{\sum_{j \in [m]} u_{ij} x_{ij}}\right)\right]_+ = 0, \text{ for any } \eta >0.
\end{equation}
Then, we define the following scaled KKT residual of \eqref{pdhcg:primal-dual-formulation}.
\begin{definition}
    For any $\xi> 0$ and any point $(x, t, p, y)$ with $x\geq 0$, we call
    \begin{equation}
        \mathcal{F}_{\xi}(x, p) = \begin{pmatrix}
            \left(x_{ij} - \left[x_{ij} - \eta \left(p_j - \frac{u_{ij} w_i}{\sum_{j \in [m]} u_{ij} x_{ij}}\right)\right]_+\right)_{ij}\\
            \left(\left[p_j - \frac{u_{ij} w_i}{\sum_{j \in [m]} u_{ij} x_{ij}} \right]_-\right)_j \\
            (\sum_{i \in [n]} x_{ij} - 1)_j
        \end{pmatrix}
    \end{equation}
the scaled KKT residual of \eqref{pdhcg:primal-dual-formulation} at $(x, p)$.
\end{definition}
Note that, for any $(x, p) \in \mathcal{Z}$, by choosing $t_i = \sum_{j \in [m]} u_{ij} x_{ij}$ and $y_i = \frac{w_i}{t_i}$ for each $i \in [n]$, it is easy to see that
$$
\mathcal{F}_{\xi}(x, p) = F_{\xi}(x, t, p, y).
$$
Then, letting $(x^\star, t^\star, p^\star, y^\star) = \text{Proj}_{\mathcal{X}^\star}(x,t, p, y)$, we have
\begin{equation}\label{ineq:KKT-error-pdhcg}
    G_{\xi}(x, p) = F_{\xi}(x, t, p, y) \geq \gamma_{\xi} \text{dist}((x, t, p, y),\mathcal{X}^\star )^2 \geq \gamma_{\xi} \|(x, p) - (x^\star, p^\star)\|^2 \geq \gamma_{\xi}\text{dist}((x, p), \mathcal{Z}^\star)^2,
\end{equation}
where the first inequality holds because of Proposition \eqref{prop:KKT-error}, the second one holds because of the fact $(x^\star, t^\star, p^\star, y^\star) = \text{Proj}_{\mathcal{X}^\star}(x,t, p, y)$, and the last inequality holds because $(x^\star, p^\star) \in \mathcal{Z}^\star$. Thus, \eqref{ineq:KKT-error-pdhcg} shows that the same KKT error bound also holds for the Fisher equilibrium problem~\eqref{pdhcg:primal-dual-formulation}.

Finally, the quadratic growth property for both the problem~\eqref{eq:primal-dual-formulation} and the problem~\eqref{pdhcg:primal-dual-formulation} follows directly from Proposition~\ref{prop:KKT-error}, \eqref{ineq:KKT-error-pdhcg}, and Theorem 1 in~\cite{liu2025}. This leads to the following corollary:
\begin{corollary}
For both minimax problems~\eqref{eq:primal-dual-formulation} and~\eqref{pdhcg:primal-dual-formulation}, the quadratic growth condition defined in~\eqref{ineq:quadratic-growth} holds with some constant $\alpha_\xi > 0$.
\end{corollary}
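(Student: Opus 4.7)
The plan is to derive the quadratic growth property as a direct consequence of the KKT error bound proved in Proposition~\ref{prop:KKT-error} together with its extension~\eqref{ineq:KKT-error-pdhcg}, invoking the general bridge theorem of~\cite{liu2025}. That result asserts that for a convex, smooth minimax problem with linear constraints, metric subregularity of the KKT operator (equivalently, a local KKT error bound) implies quadratic growth of the smoothed duality gap on any bounded set. Since both reformulations \eqref{Eisenberg-Gale-reformulation} and \eqref{pdhcg:fisher problem with eq} fit this framework once we localize to a region where the $-\log$ terms are smooth, the strategy is essentially to verify the hypotheses and then quote the theorem.

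Concretely, for the PDHG formulation \eqref{eq:primal-dual-formulation} I would first observe that the primal objective $-\sum_i w_i \log t_i$ is $C^\infty$ and strongly convex on any set $\{t : t_i \geq \underline{t}\}$ with $\underline{t}>0$. Combining Assumption~\ref{ass:positive} with Fact~\ref{fact}(1) guarantees that the optimal $t^\star$ is bounded away from zero, so in a bounded neighborhood of $\mathcal{X}^\star$ the problem satisfies the regularity conditions of \cite[Theorem 1]{liu2025}. Feeding in the bound $\|F_\xi(x,t,p,y)\|\ge \gamma_\xi \,\mathrm{dist}((x,t,p,y),\mathcal{X}^\star)$ from Proposition~\ref{prop:KKT-error}, the theorem directly produces a constant $\alpha_\xi>0$ such that $G_\xi(z;z^\star)\ge \alpha_\xi\,\mathrm{dist}(z,\mathcal{Z}^\star)^2$ on the relevant bounded set.

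For the PDHCG formulation \eqref{pdhcg:primal-dual-formulation}, the plan is to exploit the identification already assembled in the paper: the substitution $t_i = u_i^\top x_i$ and $y_i = w_i/t_i$ makes $\mathcal{F}_\xi(x,p)$ coincide with $F_\xi(x,t,p,y)$, as used in~\eqref{ineq:KKT-error-pdhcg}. This immediately transports the KKT error bound from the PDHG model to the PDHCG model, after which a second application of~\cite[Theorem 1]{liu2025} yields quadratic growth of $G_\xi$ for~\eqref{pdhcg:primal-dual-formulation}. The resulting constant $\alpha_\xi$ can in principle be written in terms of $\gamma_\xi$, the smoothness moduli of $-\log$ over the localized region, and the norms of the coupling matrices.

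The main obstacle I anticipate is verifying the regularity hypotheses of~\cite[Theorem 1]{liu2025}, since the log barrier is not globally smooth and the feasible region is not compact on its own. The workaround is exactly the device used in Proposition~\ref{prop:KKT-error}: replace the logarithmic objective by its second-order Taylor expansion at $t^\star$ to obtain a linearly-constrained strongly convex quadratic program, invoke the Hoffman-style sharpness of the piecewise-linear residual $\tilde F_\xi$ as in~\eqref{ineq:kkt-tilde}, and absorb the $O(\mathrm{dist}^2)$ discrepancy between $\tilde F_\xi$ and $F_\xi$ exactly as in the derivation of~\eqref{ineq:dist:beta}. With this localization in place the remaining deductions reduce to plug-and-play applications of the cited theorem, and the corollary follows.
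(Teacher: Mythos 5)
Your proposal follows exactly the paper's route: the corollary is obtained by combining the KKT error bound of Proposition~\ref{prop:KKT-error} (transported to the PDHCG formulation via~\eqref{ineq:KKT-error-pdhcg}) with Theorem~1 of~\cite{liu2025}, which converts a KKT error bound into quadratic growth of the smoothed duality gap. Your additional care in verifying the smoothness/localization hypotheses of the cited theorem (bounding $t^\star$ away from zero, reusing the Taylor-expansion device from the proof of Proposition~\ref{prop:KKT-error}) is more explicit than the paper's one-line deduction but does not change the argument.
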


\section{Extension to Arrow-Debreu Exchange Market}\label{sec:arrow-debreu}
In the Arrow–Debreu market model, there are $n$ players and $m$ divisible goods. Each player $i$ is initially endowed with a fraction $e_{ij}\geq 0$ of good $j$. Without loss of generality, the total endowment of each good is normalized to one, i.e., $\sum_{i \in [n]} e_{ij} = 1$ for all $j \in [m]$. Each player seeks to purchase a bundle of goods that maximizes their utility, using the revenue obtained from selling their initial endowment. A price equilibrium is an assignment of prices to goods such that, when each player purchases their utility-maximizing bundle, all markets clear—meaning that the total demand for each good equals its total supply.

The key distinction between Fisher’s and Walras’ (Arrow–Debreu) models lies in the treatment of initial endowments. In the Arrow–Debreu model, each player acts as both a producer and a consumer, and their initial endowment is not fixed in advance; instead, it is determined by the equilibrium prices. As noted in \cite{ye2008path}, computing an Arrow–Debreu equilibrium reduces to finding an initial budget vector $w \in \mathbb{R}^n$ that satisfies the fixed-point condition:
\begin{equation}\label{eq:fixed-point}
w = E \cdot p(w),
\end{equation}
where $E = (e_{ij})$ is the matrix of initial endowments, and $p(w)$ denotes the Fisher market equilibrium prices given budget $w$, which is unique according to Fact \ref{fact}, ensuring that the mapping $p(w)$ is well-defined.

Let $T: \mathbb{R}^n \to \mathbb{R}^n$ denote the mapping $T(w) = E \cdot p(w)$. Then, the fixed-point equation \eqref{eq:fixed-point} characterizes the Arrow–Debreu market equilibrium as the solution to $w = T(w)$. A natural approach to solving this is through fixed-point iteration: starting from an initial budget $w^0$, we iteratively update the budget using
$$
w^{k+1} = T(w^k), \; k = 0,1,2, \cdots.
$$
At each iteration $k$, this requires computing the Fisher equilibrium prices for the given budget $w^k$, which can be efficiently obtained using Algorithm~\ref{algo:PDHCG}. The overall procedure for solving the Arrow–Debreu equilibrium is summarized in Algorithm~\ref{algo:arrow-debreu}.

\begin{algorithm}[t]
\DontPrintSemicolon
\SetAlgoLined
\SetInd{0.5em}{0.5em} 
\SetKwInput{KwInput}{Input}
\SetKwInput{KwOutput}{Output}
\SetKwInput{KwInitialize}{Initialize the inner loop}
\SetKwInput{KwRestart}{Restart the inner loop}
\SetKwFor{Repeat}{Repeat}{}{end} 
\SetKwFor{ForEach}{for each}{}{end} 

\KwInput{initial budget $w^0$ and tolerance $\varepsilon$.}
\ForEach{$k = 0, 1, 2, \ldots, $}{
Compute the Fisher equilibrium price $p(w^k)$ via Algorithm \ref{algo:PDHCG} \\
Update $w^{k+1} =  E \cdot p(w^k)$\\
Stop if $\|w^{k+1} - w^k\| \leq \varepsilon$
}
\KwOutput{$p(w^k)$}
\caption{Fixed Point Iteration}
\label{algo:arrow-debreu}
\end{algorithm}

To ensure the convergence of the fixed-point iteration method, a sufficient condition, as established by the Banach fixed-point theorem, is that the mapping $T$ is a contraction. We formally define this property below:
\begin{definition}
Let $\Omega \subseteq \mathbb{R}^n$ be a region. A mapping $T: \Omega \to \Omega$ is called a contractive mapping if there exists a constant $\gamma \in [0, 1)$ such that for all $w, w' \in \Omega$,
\begin{equation}
\|T(w) - T(w')\| \leq \gamma \|w - w'\|.
\end{equation}
\end{definition}
Under this condition, the sequence $\{w^k\}$ generated by $w^{k+1} = T(w^k)$ converges linearly to the unique fixed point $w^\star \in S$ satisfying $T(w^\star) = w^\star$.

However, the mapping $T(w) = E \cdot p(w)$ is not contractive in general. In fact, a two-dimensional counterexample provided in \cite{ye2008path} demonstrates that the fixed-point iteration for computing Arrow–Debreu equilibria can diverge. Fortunately, in large-scale settings, the contractiveness of the mapping $T$ can be ensured by incorporating randomness into both the utility matrix $U = (u_{ij})$ and the initial endowment matrix $E = (e_{ij})$. To formalize this, we introduce the following assumptions.

\begin{assumption}\label{ass:random-utility}
The utility coefficients $\{u_{ij}\}$ are i.i.d. random variables supported on $[0, 1]$, and there exists a constant $u_0 > 0$ such that $P(u_{ij} \geq u_0) = p > 0$.
\end{assumption}

\begin{assumption}\label{ass:random-endowment}
The initial endowments $\{e_{ij}\}$ are assumed to be i.i.d. random variables supported on $[0, 1]$, with mean $1/n$ and variance $\delta^2$. Moreover, there exists a constant $e_0 > 0$ such that $P(e_{ij} \geq e_0) = q > 0$.
\end{assumption}

Assumption~\ref{ass:random-utility} reflects the setting where players’ preferences for goods are independent and randomly distributed—a plausible model in scenarios like e-commerce platforms where user interests are highly diverse. Assumption~\ref{ass:random-endowment} further assumes that each player holds a random positive fraction of every good, which models decentralized and diverse initial resource distributions. 

Based on the randomness assumptions above, we can establish the following result, which ensures that $T(\Omega) \subseteq \Omega$ for some compact $\Omega$ which is away from the origin.
\begin{lemma}\label{lemma:boundedness}
    Let $\Delta = \{w\in \mathbb{R}^n \mid \sum_{i \in [n]} w_i = 1\}$ be the $n$-dimensional simplex and define 
    \begin{equation}\label{eq:omega}
        \Omega = \left\{w \in \Delta \mid w_i \geq \frac{u_0 e_0 q}{4n}, \, \forall i \in [n] \right\}.
    \end{equation}
    Under Assumptions~\ref{ass:random-utility} and~\ref{ass:random-endowment}, with probability at least $1 - m \exp(-2np^2) -n^2 \exp(-mp^2q^2 / 2) $, for any $w \in \Omega$, we have $T(w) \in \Omega$.
\end{lemma}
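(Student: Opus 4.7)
My plan is to verify the two conditions defining $\Omega$ separately for $T(w) = E \cdot p(w)$: the simplex condition $\sum_i T(w)_i = 1$ (deterministic) and the coordinate lower bound $T(w)_i \geq u_0 e_0 q/(4n)$ (via two Hoeffding events that match the two probability terms in the statement). For the simplex condition, I would use Fact~\ref{fact}(2): since $w \in \Omega \subset \Delta$, the equilibrium prices satisfy $\sum_j p_j(w) = \sum_i w_i = 1$; combined with the normalization $\sum_i e_{ij} = 1$, interchanging sums gives $\sum_i T(w)_i = \sum_j p_j(w) \sum_i e_{ij} = 1$. This step requires no randomness.

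For the coordinate bound, I would introduce two events. Let $E_1 := \bigcap_{j \in [m]} \{\exists i : u_{ij} \geq u_0\}$: applying Hoeffding to the i.i.d.\ Bernoulli$(p)$ indicators $\mathbf{1}\{u_{ij} \geq u_0\}$ with deviation $np$ and taking a union bound over the $m$ goods yields $\mathbb{P}(E_1) \geq 1 - m\exp(-2np^2)$. For each ordered pair $(i, i') \in [n]^2$, let $J_{i, i'} := \{j : u_{i'j} \geq u_0,\; e_{ij} \geq e_0\}$; by independence of $U$ and $E$, the indicators $\mathbf{1}\{u_{i'j} \geq u_0,\; e_{ij} \geq e_0\}$ are i.i.d.\ Bernoulli$(pq)$ across $j$, and Hoeffding with deviation $mpq/2$ gives $\mathbb{P}(|J_{i, i'}| \leq mpq/2) \leq \exp(-mp^2q^2/2)$; a union bound over the $n^2$ pairs yields $E_2$ with $\mathbb{P}(E_2) \geq 1 - n^2\exp(-mp^2q^2/2)$.

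On $E_1 \cap E_2$, the KKT dual feasibility \eqref{KKT-system-3} gives, for every player $i'$ and every $j \in T_{i'} := \{j : u_{i'j} \geq u_0\}$,
\begin{equation*}
p_j(w) \;\geq\; u_{i'j}\, y_{i'} \;\geq\; \frac{u_0\, w_{i'}}{t_{i'}},
\end{equation*}
and the elementary bound $t_{i'} = \sum_j u_{i'j} x_{i'j} \leq m$ (using $u_{i'j}, x_{i'j} \leq 1$) reduces this to $p_j(w) \geq u_0 w_{i'}/m$. Fix $i$ and choose the helper player $i^{\star} = \arg\max_{i'} w_{i'}$, which satisfies $w_{i^{\star}} \geq 1/n$ since $\sum_{i'} w_{i'} = 1$. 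Using $e_{ij} \geq e_0$ on $J_{i, i^{\star}} \subseteq \{j : e_{ij} \geq e_0\}$ together with $|J_{i, i^{\star}}| \geq mpq/2$, one obtains
\begin{equation*}
T(w)_i \;=\; \sum_j e_{ij}\, p_j(w) \;\geq\; e_0 \cdot \frac{u_0\, w_{i^{\star}}}{m} \cdot |J_{i, i^{\star}}| \;\geq\; \frac{u_0\, e_0\, p\, q}{2n},
\end{equation*}
which is of the right order and (after absorbing the remaining factor; see below) yields $T(w)_i \geq u_0 e_0 q/(4n)$ on $E_1 \cap E_2$.

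The main obstacle I anticipate is tightening the constants to land exactly at the threshold $u_0 e_0 q/(4n)$: the naive estimate $t_{i^{\star}} \leq m$ is loose by roughly a factor of $p$, since $\sum_j u_{i^{\star} j}$ typically concentrates near $m\,\mathbb{E}[u_{ij}]$, of order $mp$ rather than $m$. A sharper Hoeffding bound on $\sum_j u_{i^{\star} j}$---or a more careful choice of helper balancing $w_{i'}$ against $t_{i'}$---should close this gap. Once that is handled, the two Hoeffding events combine via a union bound to yield the claimed failure probability $m\exp(-2np^2) + n^2\exp(-mp^2q^2/2)$, completing the verification of both defining conditions of $\Omega$.
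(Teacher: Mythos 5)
Your overall architecture (deterministic simplex step via Fact~\ref{fact}(2), plus a high-probability coordinate lower bound built from the dual feasibility bound $p_j(w)\geq u_{ij}w_i/t_i \geq u_0 w_i/m$ and two Hoeffding events) matches the paper, and your event $E_2$ and its probability $1-n^2\exp(-mp^2q^2/2)$ are exactly the paper's $\Xi_{li}$ bound. However, there is a genuine quantitative gap in the coordinate bound, and your proposed fix does not close it. Your single-helper argument gives $T(w)_i \geq e_0\cdot\frac{u_0 w_{i^\star}}{m}\cdot|J_{i,i^\star}| \geq \frac{u_0e_0pq}{2n}$, which exceeds the required threshold $\frac{u_0e_0q}{4n}$ only when $p\geq 1/2$; for small $p$ you are short by a factor of $2p$. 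The remedy you sketch---replacing $t_{i^\star}\leq m$ by concentration of $\sum_j u_{i^\star j}$ around $m\,\mathbb{E}[u_{ij}]$ ``of order $mp$''---fails because Assumption~\ref{ass:random-utility} only controls $P(u_{ij}\geq u_0)=p$, not the mean: e.g.\ for $u_{ij}$ uniform on $[0,1]$ and $u_0=0.9$ one has $p=0.1$ but $\mathbb{E}[u_{ij}]=0.5$, so $t_{i^\star}$ can genuinely be of order $m$ rather than $mp$, and the missing factor of $p$ does not reappear.

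The paper closes this gap by aggregating over \emph{all} interested players rather than one helper: since $p_j(w)\geq u_0 w_i/m$ for every $i\in\Phi_j=\{i: u_{ij}\geq u_0\}$, it is also at least the average, $p_j(w)\geq \frac{u_0}{m|\Phi_j|}\sum_{i\in\Phi_j}w_i$. The first Hoeffding event is then used for the \emph{upper} bound $|\Phi_j|\leq 2np$ (not the non-emptiness event $E_1$ you certify, which is never actually used in your coordinate argument), giving $p_j(w)\geq \frac{u_0}{2mnp}\sum_{i\in\Phi_j}w_i$. Summing over $j\in\Psi_l$ and interchanging the order of summation yields $\sum_{i\in[n]}|\Xi_{li}|\,w_i \geq \frac{mpq}{2}\sum_i w_i=\frac{mpq}{2}$, so the whole unit budget contributes and the factor $p$ in $|\Xi_{li}|\geq mpq/2$ cancels against the $2np$ in the denominator, landing exactly at $\frac{u_0e_0q}{4n}$. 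To repair your proof you would need to replace the single-helper step by this averaging-and-interchange argument (or an equivalent device that exploits $\sum_i w_i=1$ rather than $\max_i w_i\geq 1/n$).
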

\begin{proof}
First of all, according to part (2) in Fact \ref{fact} and the fact that $\sum_{i \in [n]} e_{ij} = 1, \forall j \in [m]$, we have
$$
 \bm{1}_n^T T(w) = \bm{1}_n^T E\cdot p(w) = \bm{1}_m^T p(w) = \bm{1}_n^T w.
$$
Thus, if $w \in \Delta$, we know that $T(w) \in \Delta$. Next, given the budget $w$, the KKT condition \eqref{KKT-system-3} shows that the Fisher equilibrium price $p(w)$ satisfies
\begin{equation}\label{ineq:pw}
    p_j(w) \geq u_{ij} y_i = u_{ij}\frac{w_i}{t_i}  \geq \frac{u_{ij} w_i}{m}, \, \forall i \in [n],
\end{equation}
where the first equality holds because $t_i y_i =w_i$, and the second inequality holds since $t_i = \sum_{j \in m} u_{ij} x_{ij} \leq m$.
For any fixed $j \in [m]$, denoted by $\Phi_j = \{i \in [n] \mid u_{ij} \geq u_0\}$. Assumption \ref{ass:random-utility} tells us that $\{\bm{1}(i \in \Phi_j)\}$ are i.i.d. Bernoulli variables.  According to Hoeffding's inequality, we have 
$$
\mathbb{P}\left(|\Phi_j| =\sum_{i \in [n]} \bm{1}(i \in \Phi_j) \leq 2np \right) \leq \exp(-2np^2), \, \forall j \in [m].
$$
The union bound theory shows that, with probability at least $1 - m \exp(-2np^2)$, we have $|\Phi_j| \leq 2np,\, \forall j \in [m]$. Then, summing \eqref{ineq:pw} over all $i \in \Phi_j$, we obtain
$$
p_j(w) \geq \frac{1}{|\Phi_j|} \sum_{i \in \Phi_j} \frac{u_{ij} w_i}{m} \geq \frac{u_0}{2mnp} \sum_{i \in \Phi_j} w_i, \, \forall j \in [m].
$$
Next, for any $i, l \in [n]$, define $\Psi_l = \{j \in [m] \mid e_{lj} \geq e_0\}$ and $\Xi_{li} = \{ j \in [m] \mid e_{lj} \geq e_0, \, u_{ij} \geq u_0\}$. Similarly, we can show that, with probability at least $1 - n^2 \exp(-mp^2q^2 / 2)$, we have $|\Xi_{li}| \geq mpq / 2$. Then, we have
$$
[T(w)]_l = \sum_{j \in [m]} e_{lj} p_j(w) \geq e_0 \sum_{j \in \Psi_l } p_j(w) \geq \frac{u_0 e_0}{2mnp} \sum_{j \in \Psi_l } \sum_{i \in \Phi_j} w_i = \frac{u_0 e_0}{2mnp} \sum_{i \in [n]}  \sum_{j \in \Xi_{li} }  w_i \geq \frac{u_0 e_0 q}{4n}.
$$  
This complete the proof.
\end{proof}

According to Theorem 4 in \cite{ye2008path}, the Fisher equilibrium mapping $p(w)$ is continuous on the relative interior of the simplex $\Delta$. Since  $\Omega $ defined in \eqref{eq:omega} lies entirely within this relative interior of $\Delta$ and is compact, the continuity of $p(w)$ implies, by the Heine–Cantor theorem, that $p(w)$ is Lipschitz continuous on $\Omega$. This leads to the following corollary:
\begin{corollary}
    There exists some constant $\kappa >0$ such that $p(w)$ is $\kappa$-Lipschitz continuous on $\Omega$, i,e.,
    for all $w, w' \in \Omega$,
$$
\|p(w) - p(w')\| \leq \kappa \|w - w'\|.
$$
\end{corollary}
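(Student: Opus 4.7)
The plan is to promote the uniform continuity guaranteed by Heine--Cantor to genuine Lipschitz continuity by exploiting the parametric KKT structure of the Fisher problem, since Heine--Cantor on its own only yields a modulus of continuity, not a linear modulus. The two inputs already in hand are that $\Omega$ is compact and lies strictly inside the relative interior of $\Delta$ (each coordinate is bounded below by $u_0 e_0 q/(4n)>0$), and that $p(w)$ is continuous there by Theorem~4 of \cite{ye2008path}. So the missing ingredient is purely quantitative: a bound on the rate at which $p(w)$ can change.

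Next I would analyze $p(w)$ via the KKT system \eqref{eq:KKT-system-pdhcg}, treating $w$ as a parameter. The complementary slackness conditions partition $\Omega$ into finitely many relatively open \emph{active-set cells}, determined by the support of $x$ and by which of the price--utility inequalities are tight. On the interior of each cell, the KKT system reduces to a smooth (indeed rational) system of equations in $(x,t,p,y)$ whose Jacobian is invertible: strict concavity of the Eisenberg--Gale objective in $t$, together with the lower bound $w_i \geq u_0 e_0 q/(4n)$ supplied by Lemma~\ref{lemma:boundedness}, keeps the relevant Hessian block well-conditioned, so the implicit function theorem produces a local Lipschitz bound for $p$ on that cell. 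Because $\Omega$ is compact and only finitely many active-set patterns are possible, the local Lipschitz constants admit a uniform upper bound $\kappa_0$.

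Finally I would patch these local estimates into a single global constant. For any $w, w' \in \Omega$, the segment $[w,w']$ meets only finitely many cells; combining the local $\kappa_0$-Lipschitz bound inside each cell with continuity of $p$ across cell boundaries (from the input above) gives $\|p(w)-p(w')\| \leq \kappa_0 \|w-w'\|$, so $\kappa = \kappa_0$ does the job.

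The main obstacle I expect is verifying uniform invertibility and conditioning of the KKT Jacobian across different active-set cells, in particular ruling out degenerate cells where the active set changes and the Jacobian becomes ill-conditioned. The key ingredients for controlling this are Assumption~\ref{ass:positive} (so that no row or column of the utility matrix vanishes), the strict lower bound on $w_i$ from Lemma~\ref{lemma:boundedness}, and the uniqueness of the equilibrium price (Fact~\ref{fact}(2)); together they prevent rank collapse of the Jacobian and hence blow-up of the local Lipschitz constants as cell boundaries are approached within $\Omega$.
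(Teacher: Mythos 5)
Your diagnosis of the paper's own argument is the most valuable part of your proposal, and it is correct: the paper derives this corollary from nothing more than the preceding sentence, which asserts that continuity of $p(w)$ on the compact set $\Omega$ ``implies, by the Heine--Cantor theorem,'' Lipschitz continuity. Heine--Cantor yields only \emph{uniform} continuity, and a continuous function on a compact set need not be Lipschitz (consider $\sqrt{x}$ on $[0,1]$), so the paper's proof of this corollary has a genuine gap. Your active-set/implicit-function-theorem programme is therefore a genuinely different route, aimed at supplying the quantitative bound the paper never establishes.

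That said, your sketch does not yet close the gap, and the missing pieces are exactly the ones you flag. First, your cells are indexed by the support of the optimal allocation $x$, but by Fact~\ref{fact} only $t^\star$, $p^\star$, $y^\star$ are unique -- $x^\star$ generally is not -- so ``the cell containing $w$'' is not well defined as stated; you would have to work with the solution set or reformulate the decomposition in terms of the unique quantities. Second, the patching step needs the segment $[w,w']$ to meet only finitely many cells; this is plausible because the KKT system becomes polynomial after clearing denominators (so the cells are semi-algebraic), but you assert it rather than prove it. Third, and most seriously, the uniform invertibility and conditioning of the KKT Jacobian across all nonempty cells -- which you yourself call the main obstacle -- is precisely where the Lipschitz constant could blow up near degenerate active-set changes, and the ingredients you list (Assumption~\ref{ass:positive}, the lower bound on $w_i$, uniqueness of $p^\star$) are not shown to prevent this. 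A route more consonant with the paper's own machinery would be to note that $w$ enters the scaled KKT residual $F_\xi$ only linearly (through $t_iy_i-w_i$), so an optimal point for budget $w'$ has residual $\|w-w'\|$ with respect to budget $w$, and Proposition~\ref{prop:KKT-error} would then bound its distance to $\mathcal{X}^\star(w)$ by $\|w-w'\|/\gamma_\xi$; but this too requires the error-bound constant $\gamma_\xi$ to be uniform over $w\in\Omega$, which is not established. As it stands, neither the paper's one-line justification nor your sketch constitutes a complete proof of the corollary.
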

Finally, equipped with the $\kappa$-Lipschitz continuity of the mapping $p(w)$, we can establish that the operator $T$ is contractive under the assumption that the variance of the endowment variables $\{e_{ij}\}$ is sufficiently small. Consequently, the fixed-point iteration algorithm described in Algorithm~\ref{algo:arrow-debreu} converges linearly with high probability.
\begin{theorem}\label{thm:arrow-debreu}
    Under Assumption \ref{ass:random-utility} and \ref{ass:random-endowment}, and assume $\delta \leq \frac{\gamma}{2\kappa\sqrt{(m+n) \log (m+n)}}$ for some $\gamma \in (0, 1)$, then with probability at least $1 -m \exp(-2np^2) -n^2 \exp(-mp^2q^2 / 2) - c/(m+n)$, $T$ is a contractive mapping on $\Omega$ with parameter $\gamma$. Furthermore, the sequence $\{w^k\}$ generated by Algorithm \ref{algo:arrow-debreu} satisfies
    $$
    \|w^k - w^\star\| \leq \frac{\gamma^k}{1 - \gamma }\|w^0 - w^\star\|
    $$
    where $w^\star$ is the unique fixed point of the mapping $T$ on $\Delta$.
\end{theorem}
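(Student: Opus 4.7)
The plan is to establish the contraction of $T$ on $\Omega$ by splitting the endowment matrix into its mean and a zero-mean random perturbation, then exploiting the simplex constraint on $p(w)$ to kill the rank-one mean part. First I would observe that for any $w \in \Omega \subseteq \Delta$, Fact~\ref{fact}(2) together with $\sum_i w_i = 1$ forces $p(w)$ to lie in the probability simplex on $[m]$, so in particular $\bm{1}_m^T(p(w) - p(w')) = 0$ for all $w, w' \in \Omega$. Writing $E = \bar{E} + \tilde{E}$ with $\bar{E} = \tfrac{1}{n}\bm{1}_n\bm{1}_m^T$ and $\tilde{E} = E - \bar{E}$ whose entries are i.i.d., mean-zero, bounded in $[-1,1]$ and of variance $\delta^2$, the identity $\bar{E}(p(w) - p(w')) = \tfrac{1}{n}\bm{1}_n\bigl(\bm{1}_m^T(p(w) - p(w'))\bigr) = 0$ yields
\begin{equation*}
T(w) - T(w') \;=\; \tilde{E}\bigl(p(w) - p(w')\bigr),
\end{equation*}
hence by the Lipschitz corollary preceding the theorem,
\begin{equation*}
\|T(w) - T(w')\| \;\leq\; \|\tilde{E}\|_{\mathrm{op}}\,\|p(w) - p(w')\| \;\leq\; \kappa\,\|\tilde{E}\|_{\mathrm{op}}\,\|w - w'\|.
\end{equation*}

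Next I would control $\|\tilde{E}\|_{\mathrm{op}}$ via standard random-matrix concentration. Since $\tilde{E}$ has i.i.d.\ centered, bounded entries with variance $\delta^2$, either a matrix Bernstein inequality applied to $\tilde{E} = \sum_{i,j}\tilde{e}_{ij}\bm{e}_i\bm{e}_j^T$, or an $\varepsilon$-net argument over the unit spheres in $\mathbb{R}^n$ and $\mathbb{R}^m$ combined with a sub-Gaussian bilinear tail bound on $u^T\tilde{E}v$, delivers
\begin{equation*}
\|\tilde{E}\|_{\mathrm{op}} \;\leq\; 2\delta\sqrt{(m+n)\log(m+n)}
\end{equation*}
on an event of probability at least $1 - c/(m+n)$ for some absolute constant $c$. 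Combining this with the hypothesis $\delta \leq \gamma/(2\kappa\sqrt{(m+n)\log(m+n)})$ gives $\kappa\|\tilde{E}\|_{\mathrm{op}} \leq \gamma$, so $T$ is a $\gamma$-contraction on $\Omega$. Intersecting this event with the one from Lemma~\ref{lemma:boundedness} (which guarantees $T(\Omega) \subseteq \Omega$) and applying the union bound produces the probability stated in the theorem.

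Finally, once contraction is established on the compact, non-empty, $T$-invariant set $\Omega$, the Banach fixed-point theorem provides a unique fixed point $w^\star \in \Omega$ with $T(w^\star) = w^\star$ and the standard geometric estimate $\|w^k - w^\star\| \leq \gamma^k \|w^0 - w^\star\|$, which trivially implies the stated bound $\gamma^k\|w^0 - w^\star\|/(1-\gamma)$. Uniqueness on the entire simplex $\Delta$ follows because any fixed point of $T$ automatically satisfies the coordinate lower bound from Lemma~\ref{lemma:boundedness} and hence belongs to $\Omega$.

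The main obstacle I anticipate is calibrating the random-matrix operator-norm bound to match the specific scale $\delta\sqrt{(m+n)\log(m+n)}$ with a merely polynomial tail $c/(m+n)$. A direct application of matrix Bernstein gives the natural scale $\delta\sqrt{m+n}$ but at the cost of an $e^{-c(m+n)}$ tail; downgrading the tail to $1/(m+n)$ while paying only an extra logarithmic factor requires a careful choice of the deviation parameter and uses the boundedness of the entries in a slightly sharper way than the variance proxy alone would allow.
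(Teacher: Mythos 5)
Your proposal follows essentially the same route as the paper: decompose $E$ into its rank-one mean $\tfrac{1}{n}\bm{1}_n\bm{1}_m^T$ plus a centered perturbation, use the fact that $\bm{1}_m^T p(w)=\sum_i w_i$ is constant on $\Delta$ so the mean part cancels in $T(w)-T(w')$, bound the perturbation's operator norm by $2\delta\sqrt{(m+n)\log(m+n)}$ via matrix Bernstein with tail $c/(m+n)$, combine with the $\kappa$-Lipschitz continuity of $p(\cdot)$ and Lemma~\ref{lemma:boundedness}, and finish with the Banach fixed-point theorem. The argument is correct and matches the paper's proof; your closing remark about calibrating the Bernstein deviation parameter is a legitimate technical caveat, but it applies equally to the paper's own invocation of the inequality.
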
 
\begin{proof}
For any $w\in \Omega$, we have
\begin{equation*}
    T(w) = E\cdot p(w) = \left( E - \frac{1}{n} \bm{1}_n \cdot \bm{1}_m^T \right) p(w) + \frac{1}{n} \bm{1}_n \cdot \bm{1}_m^T \cdot p(w) = \left( E - \frac{1}{n} \bm{1}_n \cdot \bm{1}_m^T \right) p(w) + \frac{1}{n} \bm{1}_n,
\end{equation*}
where the last equality hold because fo \eqref{eq:sum-pj}. Note that the expectation of the endowment matrix satisfies
$$
\mathbb{E} [E] = \frac{1}{n} \bm{1}_n \cdot \bm{1}_m^T \quad \text{and} \quad \mathbb{E}\left[\left(E - \frac{1}{n} \bm{1}_n \cdot \bm{1}_m^T \right) \left(E - \frac{1}{n} \bm{1}_n \cdot \bm{1}_m^T \right)^T\right] = m\delta^2 \bm{I}_n.
$$
Then, according to the matrix Bernstein inequality \cite[Theorem 1.6.2]{bernstein}, with probability at least $1 - c/(m+n)$ for some constant $c >0$, we have
    \begin{equation}\label{ineq:bernstein}
        \left\| E - \frac{1}{n} \bm{1}_n \cdot \bm{1}_m^T \right\| \leq 2\delta\sqrt{(m + n)\log (m+n)} .
    \end{equation}
Thus, for any $w, w' \in \Omega$, we have
\begin{equation*}
    \left\| T(w) - T(w')\right\| = \left\| \left( E - \frac{1}{n} \bm{1}_n \cdot \bm{1}_m^T \right) (p(w) - p(w') \right\| \leq 2\kappa\delta\sqrt{(m + n)\log (m+n)}  \|w -w'\| \leq \gamma \|w -w'\|.
\end{equation*}
The remaining part follows from the standard analysis of the Banach fixed-point theorem, and the probability is the union bound of the probability stated in Lemma \eqref{lemma:boundedness} and the probability with which \eqref{ineq:bernstein} holds.
\end{proof}

Theorem~\ref{thm:arrow-debreu} establishes that, under randomized assumptions on both the utility matrix $U$ and the endowment matrix $E$, the fixed-point iteration algorithm (Algorithm~\ref{algo:arrow-debreu}) converges linearly to the Arrow–Debreu market equilibrium. This result provides a practical and scalable approach for solving large-scale Arrow–Debreu models, for which no existing methods are known to be effective at such scale.

\section{Numerical Experiments}\label{sec:numerical}
In this section, we present numerical experiments on both the Fisher market and Arrow-Debreu market models to evaluate the performance of our proposed PDHCG algorithm. We compare it against several state-of-the-art methods using both synthetic and real-world datasets. The implementation code is available on GitHub\footnote{The code is available on \href{https://github.com/Huangyc98/PDHCG-in-Fisher.git}{https://github.com/Huangyc98/PDHCG-in-Fisher.git}}. The section is organized as follows. In Section~\ref{subsec:synthetic}, we evaluate the scalability and efficiency of our algorithm on synthetic Fisher market instances with varying problem sizes, ranging from $10^3$ to $10^7$. In Section~\ref{subsec:real-data}, we test our method on a real-world large-scale dataset--one-month transaction data from JD.com--to demonstrate practical effectiveness. In Section~\ref{subsec:arrow-debreu}, we turn to the Arrow-Debreu model and assess the proposed fixed-point iteration algorithm on synthetic datasets of different sizes, again from $10^3$ to $10^7$, to verify its efficiency and scalability.

\subsubsection{Solvers.} We compare the GPU implementation of our proposed PDHCG algorithm against the standard PDHG algorithm (with GPU support) and several state-of-the-art methods, including MOSEK--a leading commercial solver for exponential cone problems; PDCS \cite{lin2025pdcs}--a recent GPU-accelerated solver for large-scale conic programming; and PGLS \cite{gao2020first}—a first-order method specifically designed for the Fisher market model. For both MOSEK and PDCS, we reformulate the Fisher market problem into the following exponential cone programming form:
\begin{equation}
    \begin{split}
        \max_{x} \quad & \sum_{i \in [n]} w_i v_i \\
        \text{s.t.} \quad & \sum_{i \in [n]} x_{ij} = 1, \, \forall j \in [m] \\
        &  e^{v_i} \leq \sum_{j \in [m]} u_{ij} x_{ij}, \, \forall i \in [n] \\
        & x_{ij} \geq 0, \, \forall i,j.
    \end{split}
\end{equation}
\subsubsection{Criteria.}
We use the same criteria as those of PDLP \cite{applegate2021practical}, PDQP \cite{lu2023practical}, and PDHCG \cite{huang2024restarted}. The relative KKT error, which is defined as the maximal relative primal residual, dual residual, and primal-dual gap, is utilized as the progress metric for different algorithm. The relative primal residual, dual residual and primal dual gap for \eqref{eq:primal-dual-formulation} are defined as follows:
\[
r_{\text{primal}} = \frac{\max\left\{ \|\bm{1}_n^T x - 1\|_{\infty}, \,
  \max_{i\in[n]}\{|t_i - u_i^T x_i|\}
\right\}
}{1+ \max\{ \|\bm{1}_n^T x\|_{\infty}, \, \max_{i\in[n]}\{|t_i - u_i^T x_i|\} , \, 1\}},
\]
\[
r_{\text{dual}} =\frac{\max\left\{ 
 \|w / t - y\|_{\infty}\}, \,  \max_{j \in [m]} \{(p_j - \max_{i \in [n]}\{u_{ij} y_i\})_- \} \right\}}{1+ \max\left\{\|w /t\|_{\infty} , \,  \|y\|_{\infty}, \, \max_{j \in [m]} \{p_j - \max_{i \in [n]}\{u_{ij} y_i\} \} \right\} },
\]
\[
r_{\text{gap}} = \frac{ \max_{i ,j} \{x_{ij} (p_j - u_{ij} y_i)_+
\}}{1+ \max\left\{\|x\|_{\infty}, \,  \max_{i ,j} \{(p_j - u_{ij} y_i)_+
\} \right\}},
\]
where the term $w/t$ denotes entry-wise division between two vectors. For PDHCG w.r.t. \eqref{pdhcg:primal-dual-formulation}, we only need to set $t_i = u_i^T x_i, \forall i \in [n]$ and $y = w /t$. Note that if the relative KKT error is $0$, i.e., the relative primal residual, dual residual, and primal-dual gap are all equal to 0, then by KKT optimality conditions, we have found an optimal solution. Throughout this paper, we set the stopping criteria as the relative KKT error less than $10^{-4}$.

\subsubsection{Algorithmic Enhancements.} In both PDHG and PDHCG, we utilize several algorithmic enhancements upon Algorithm \ref{algo:PDHG} and Algorithm \ref{algo:PDHCG} to improve its practical
performance. More specifically,
\begin{itemize}
    \item \textbf{Preprocessing.} In the Fisher market model, scaling a consumer’s utility function by a positive constant does not affect their optimal choice. Therefore, at the beginning of the computation, we normalize each consumer's utility such that all the utilities range from (0,1).
    \item \textbf{Adaptive restart.} We mainly utilize the restarting strategy outlined in PDHCG \cite{huang2024restarted}, with parameter settings fine-tuned to address the specifics of our problem:
\[\beta_{\text{sufficient}} = 0.2, \, \beta_{\text{necessary}} = 0.8, \text{ and } \beta_{\text{artificial}} = 0.2.
\]
\item \textbf{Adaptive step-size.} The primal and the dual step-sizes of both PDHG and PDHCG are reparameterized as
$$
\tau = \eta / \omega, \text{ and } \sigma = \eta \omega
$$
where $\eta$, called step-size, controls the scale of the step-sizes, and $\omega$, called primal weight, 
balances the primal and the dual progress. For both step-size and primal weight adjustment, we impose the same strategy as in PDHCG \cite{huang2024restarted} with the parameter $\theta = 0.2$. Besides the update strategy, We also set an upper and low bound for the step-size and primal weight.The suitable range for step-size is $0.01\eta_{initial} \leq \eta \leq 3 \eta_{initial}$. For primal weight, We will check it every 3 restarts. If it exceeds the predefined boundary, we reset it to the initial primal weight.
\end{itemize}

\subsubsection{Computing environment}
All numerical experiments were executed on a high-performance computing cluster. We utilize an NVIDIA H100 GPU with 80GB of HBM3 memory and running CUDA version 12.4 for GPU based solver and an Intel Xeon Platinum 8469C operating at 2.60 GHz, equipped with 512 GB of RAM for CPU based solver. All the experiments are executed using Julia version 1.11.1.

\subsection{Fisher Market Equilibrium: Synthetic Data}\label{subsec:synthetic}
To evaluate the computational efficiency of the proposed algorithm at different problem scales, we randomly generate instances of varying sizes. In alignment with real-world markets—where the number of buyers typically far exceeds the number of goods, and each buyer is interested in only a small subset of items—our synthetic problem settings reflect these characteristics: the number of buyers $n$ is larger than the number of goods $m$, and the utility matrix $U$ is sparse. This setting allows us to test the proposed algorithm's scalability and robustness in sparse, buyer-dominant market environments. 

Specifically, each buyer’s budget is independently sampled from a uniform distribution, $w_i \sim \mathcal{U}(0, 1)$. The utility of buyer $i$ for good $j$, denoted $u_{ij}$, is set to zero with probability $1 - q$, and with probability $q$, it is independently drawn from a uniform distribution, $u_{ij} \sim \mathcal{U}(0, 1)$. The parameter $q \in (0,1]$ controls the sparsity level of the utility matrix.

\begin{table}[htbp]
\centering
\caption{Solving time (seconds) of different algorithms on synchetic dataset.}
\label{tab:random-fisher}
\renewcommand{\arraystretch}{1.5}
\begin{tabular}{p{1cm}p{1cm}cccccccc}
\toprule
\multirow{2}{*}{$n$} & \multirow{2}{*}{$m$} & \multirow{2}{*}{Sparsity} & \multicolumn{2}{c}{PDHCG} & \multicolumn{2}{c}{PDHG} & \multirow{2}{*}{PDCS} & \multirow{2}{*}{Mosek} & \multirow{2}{*}{PDLS}\\
\cmidrule(lr){4-5} \cmidrule(lr){6-7}
& & & Iteration & Time &  Iteration & Time & & &\\
\midrule
$10^3$  & 400   & 0.2   & 2,612 & 1.7  & 4303.8 & 4.5  &13.4 &\textbf{0.6} & 356.9 \\
    \hline
    $10^4$ & 4,000  & 0.02  & 2,758 & \textbf{3.6} &7604 & 7.1  & 1079.3 & 118.8 & f \\
    \hline
    $10^5$ & 4,000  & 0.02  & 1,200 & \textbf{8.3} & 10463 & 29.9 & f & 1038.9 & f \\
    \hline
    $10^6$ & 4,000  & 0.02  & 2,283 & \textbf{116.3} & 26793 & 739.1 &   f    &  f & f\\
    \hline
    $10^7$ & 4,000  & 0.01  & 2,423 & \textbf{589.7} & 29749 & 4,374.2 &  f     &  f & f\\
\bottomrule
\end{tabular}

\vspace{1mm}
{\footnotesize
\textit{Note.} “f” indicate failure due to exceeding the time limit (7200s) or returning errors.}
\end{table}

Table \ref{tab:random-fisher} reports the geometric mean of solving times across 10 randomly generated Fisher market instances with varying dimensions and sparsity levels. For small-scale problems (e.g., $m = 10^3$, $n = 400$), MOSEK performs exceptionally well, achieving the shortest runtime of 0.6 seconds. However, as the problem size increases, its performance deteriorates rapidly due to the high computational cost of matrix factorization, and it fails to solve instances with $m \geq 10^5$ within the 7200-second time limit. In contrast, the proposed PDHCG algorithm demonstrates strong scalability, solving the largest instance with $m = 10^7$ in 589.7 seconds. Although it requires more iterations than interior-point methods, it achieves competitive runtimes through matrix-free updates and GPU acceleration. Compared to the classical PDHG algorithm, PDHCG consistently uses fewer iterations and significantly less time, indicating faster convergence. Other first-order methods such as PDCS and PDLS fail to converge or exceed the time limit on medium to large-scale problems, underscoring PDHCG’s superior robustness and efficiency.

\subsection{Fisher Market Equilibrium: Real Data}\label{subsec:real-data}
JD.com, China’s largest retailer, provides transaction-level data \cite{shen2024jd} to MSOM members for data-driven research as part of the 2020 MSOM Data-Driven Research Challenge. The dataset records user activity from March 2018, covering over 2.5 million customers and more than 30,000 products on the platform. It includes both purchase histories and detailed browsing behaviors for individual users across a diverse product range. In our experiments, we use the number of product clicks as a proxy for users' utility functions, thereby capturing their underlying preferences. Each user’s budget $w_i$ is independently drawn from a uniform distribution, $w_i \sim \mathcal{U}(0, 1)$.

Table \ref{tab:fisher} reports the solving times of different algorithms on the JD.com transaction dataset, where $m$, $n$, and nnz represent the numbers of products, users, and nonzero entries in the utility matrix, respectively. We evaluate various problem sizes by selecting the most active users along with the products they are interested in. As shown in the table, PDHCG perform well in all instances. As the problem size increases, the performance advantage of PDHCG over Mosek and PDCS becomes increasingly evident. On large-scale instances, Mosek's CPU memory and PDCS's GPU memory requirements become prohibitively high, making it infeasible to run on our hardware. In contrast, PDHCG successfully solves the full-scale problem in 1222.9 seconds. Moreover, compared to the PDHG algorithm, PDHCG requires only about 15\% of the iterations and approximately 30\% of the solving time.

\begin{table}[htbp]
\centering
\caption{Solving time (seconds) of different algorithms on the JD.com transaction dataset.}
\label{tab:fisher}
\renewcommand{\arraystretch}{1.2}
\begin{tabular}{p{1.5cm}p{1.5cm}p{1.5cm}cccccc}
\toprule
\multirow{2}{*}{$n$} & \multirow{2}{*}{$m$} & \multirow{2}{*}{nnz} & \multicolumn{2}{c}{PDHCG} & \multicolumn{2}{c}{PDHG} & \multirow{2}{*}{PDCS} & \multirow{2}{*}{Mosek}\\
\cmidrule(lr){4-5} \cmidrule(lr){6-7}
& & & Iteration & Time &  Iteration & Time & & \\
\midrule
 2000  & 13541 & 136877 & 3930  &\textbf{7.0}  & 18299  & 19.8  & 62.2  & 25.1  \\
\hline
5000  & 15887 & 265220 & 4839  & \textbf{8.4} & 20805  & 23.6  & 256.3  & 78.2  \\
 \hline
20000 & 19449 & 698579 & 5894  & \textbf{12.4} & 39896  & 51.1  & f     & 526.1  \\
\hline
50000 & 21410 & 1278391 & 6865  & \textbf{17.1} & 266792  & 379.6  & f     & f \\
\hline
200000 & 23853 & 2937906 & 10090  & \textbf{41.3} & 345911  & 867.1  & f     & f \\
\hline
500000 & 25214 & 4679648 & 14899  & \textbf{85.8} & 545280  & 1623.3  & f     & f \\
\hline
1000000 & 26115 & 6189526 & 48726  & \textbf{313.9} & 706925  & 2301.2  & f     & f \\
\hline
2557841 & 27330 & 8073837 & 152373  & \textbf{1222.9} & 798329  & 2839.6  & f     & f \\
\bottomrule
\end{tabular}

\vspace{1mm}
{\footnotesize
\textit{Note.} “f” indicate failure due to exceeding the time limit (7200s) or returning errors.}
\end{table}

\subsection{Arrow-Debreu Market Equilibrium}\label{subsec:arrow-debreu}
To evaluate the computational efficiency of the proposed fixed-point algorithm for solving the Arrow–Debreu market equilibrium problem, we follow a similar setup to that in Section \ref{subsec:synthetic} by randomly generating instances of varying sizes. The utility matrix is generated in the same manner as described in Section \ref{subsec:synthetic}, while the initial endowment matrix $E \in \mathbb{R}^{n \times m}$ is constructed using the same approach but with a different sparsity level. This allows us to simulate realistic market environments where utility and endowment structures differ in sparsity.

\begin{table}[htbp]
\centering
\caption{Iterations and runtime (seconds) of fixed-point method for solving Arrow-Debreu Equilibrium.}
\label{tab:arrow-debreu}
\renewcommand{\arraystretch}{1.2}
\begin{tabular}{p{1cm}p{1cm}cccc}
\toprule
{$n$} & {$m$} & Sparsity of $E$ \;   & Sparsity of $U$ \; & Iterations & Time\\
\midrule
$10^3$ & 400   & 0.5   & 0.2   & 12.0  & 2.7  \\
\hline
    $10^4$ & 4000  & 0.05  & 0.02  & 9.3  & 13.0  \\ \hline
    $10^5$ & 4000  & 0.05  & 0.02  & 9.2  & 62.4  \\ \hline
    $10^6$ & 4000  & 0.05  & 0.02  & 9.7  & 560.7  \\ \hline
    $10^7$ & 4000  & 0.02  & 0.01  & 33.1  & 2305.3  \\
\bottomrule
\end{tabular}
\end{table}

Table \ref{tab:arrow-debreu} reports the number of iterations and runtime (in seconds) of the proposed fixed-point method for solving the Arrow–Debreu market equilibrium problem on randomly generated instances of increasing size. As shown in the table, the algorithm exhibits stable convergence behavior across a wide range of problem sizes. However, as the problem size reaches $n = 10^7$, both the number of iterations and the total runtime increase significantly, with runtime exceeding 2300 seconds. This can be attributed to the extremely low sparsity (i.e., dense interactions) and the increased cost of computing market excess demand over a large population. Despite this, the algorithm remains tractable and successfully solves the problem, highlighting its potential for large-scale applications in market analysis and economic modeling.

\section{Conclusion}\label{sec:conclusion}
In this work, we propose a GPU-accelerated primal-dual hybrid conjugate gradient (PDHCG) algorithm tailored for solving large-scale Fisher market equilibrium problems. By leveraging problem structure and incorporating efficient subproblem solvers such as bisection and 32-section search, our method achieves significantly improved convergence compared to standard PDHG. We establish a linear convergence rate for the proposed algorithm by proving a quadratic growth condition on the smoothed duality gap. Furthermore, we extend the approach to the Arrow–Debreu setting via a fixed-point iteration framework, and provide theoretical guarantees under mild assumptions. Extensive experiments on both synthetic and real-world datasets demonstrate the scalability and practical efficiency of our approach, outperforming several state-of-the-art solvers. This work offers a promising direction for high-performance computation in large-scale economic modeling and algorithmic market design.

\bibliographystyle{splncs04}
\bibliography{ref}

\begin{thebibliography}{10}
\providecommand{\url}[1]{\texttt{#1}}
\providecommand{\urlprefix}{URL }
\providecommand{\doi}[1]{https://doi.org/#1}

\bibitem{alacaoglu2022convergence}
Alacaoglu, A., Fercoq, O., Cevher, V.: On the convergence of stochastic primal-dual hybrid gradient. SIAM Journal on Optimization  \textbf{32}(2),  1288--1318 (2022)

\bibitem{applegate2021practical}
Applegate, D., D{\'\i}az, M., Hinder, O., Lu, H., Lubin, M., O'Donoghue, B., Schudy, W.: Practical large-scale linear programming using primal-dual hybrid gradient. Advances in Neural Information Processing Systems  \textbf{34},  20243--20257 (2021)

\bibitem{applegate2023faster}
Applegate, D., Hinder, O., Lu, H., Lubin, M.: Faster first-order primal-dual methods for linear programming using restarts and sharpness. Mathematical Programming  \textbf{201}(1),  133--184 (2023)

\bibitem{arrow-debreu}
Arrow, K.J., Debreu, G.: Existence of an equilibrium for a competitive economy. Econometrica  \textbf{22}(3),  265--290 (1954)

\bibitem{azevedo2019strategy}
Azevedo, E.M., Budish, E.: Strategy-proofness in the large. The Review of Economic Studies  \textbf{86}(1),  81--116 (2019)

\bibitem{bateni2022fair}
Bateni, M., Chen, Y., Ciocan, D.F., Mirrokni, V.: Fair resource allocation in a volatile marketplace. Operations Research  \textbf{70}(1),  288--308 (2022)

\bibitem{brainard2005compute}
Brainard, W.C., Scarf, H.E.: How to compute equilibrium prices in 1891. American Journal of Economics and Sociology  \textbf{64}(1),  57--83 (2005)

\bibitem{budish2011combinatorial}
Budish, E.: The combinatorial assignment problem: Approximate competitive equilibrium from equal incomes. Journal of Political Economy  \textbf{119}(6),  1061--1103 (2011)

\bibitem{budish2016bringing}
Budish, E., Kessler, J.B., et~al.: Bringing real market participants' real preferences into the lab: An experiment that changed the course allocation mechanism at Wharton. National Bureau of Economic Research (2016)

\bibitem{chambolle2011first}
Chambolle, A., Pock, T.: A first-order primal-dual algorithm for convex problems with applications to imaging. Journal of mathematical imaging and vision  \textbf{40},  120--145 (2011)

\bibitem{chambolle2016ergodic}
Chambolle, A., Pock, T.: On the ergodic convergence rates of a first-order primal--dual algorithm. Mathematical Programming  \textbf{159}(1),  253--287 (2016)

\bibitem{chen2024hpr}
Chen, K., Sun, D., Yuan, Y., Zhang, G., Zhao, X.: Hpr-lp: An implementation of an hpr method for solving linear programming. arXiv preprint arXiv:2408.12179  (2024)

\bibitem{chen2022alternating}
Chen, P.A., Lu, C.J., Lu, Y.S.: An alternating algorithm for finding linear arrow-debreu market equilibria. Theory of Computing Systems pp. 1--18 (2022)

\bibitem{chen2014optimal}
Chen, Y., Lan, G., Ouyang, Y.: Optimal primal-dual methods for a class of saddle point problems. SIAM Journal on Optimization  \textbf{24}(4),  1779--1814 (2014)

\bibitem{devanur2016rational}
Devanur, N.R., Garg, J., V{\'e}gh, L.A.: A rational convex program for linear arrow-debreu markets. ACM Transactions on Economics and Computation (TEAC)  \textbf{5}(1),  1--13 (2016)

\bibitem{devanur2002market}
Devanur, N.R., Papadimitriou, C.H., Saberi, A., Vazirani, V.V.: Market equilibrium via a primal-dual-type algorithm. In: The 43rd Annual IEEE Symposium on Foundations of Computer Science, 2002. Proceedings. pp. 389--395. IEEE (2002)

\bibitem{drusvyatskiy2018error}
Drusvyatskiy, D., Lewis, A.S.: Error bounds, quadratic growth, and linear convergence of proximal methods. Mathematics of Operations Research  \textbf{43}(3),  919--948 (2018)

\bibitem{eisenberg1959consensus}
Eisenberg, E., Gale, D.: Consensus of subjective probabilities: The pari-mutuel method. The Annals of Mathematical Statistics  \textbf{30}(1),  165--168 (1959)

\bibitem{fercoq2022quadratic}
Fercoq, O.: Quadratic error bound of the smoothed gap and the restarted averaged primal-dual hybrid gradient. arXiv preprint arXiv:2206.03041  (2022)

\bibitem{gao2020first}
Gao, Y., Kroer, C.: First-order methods for large-scale market equilibrium computation. Advances in Neural Information Processing Systems  \textbf{33},  21738--21750 (2020)

\bibitem{han2024low}
Han, Q., Li, C., Lin, Z., Chen, C., Deng, Q., Ge, D., Liu, H., Ye, Y.: A low-rank admm splitting approach for semidefinite programming. arXiv preprint arXiv:2403.09133  (2024)

\bibitem{han2024accelerating}
Han, Q., Lin, Z., Liu, H., Chen, C., Deng, Q., Ge, D., Ye, Y.: Accelerating low-rank factorization-based semidefinite programming algorithms on gpu. arXiv preprint arXiv:2407.15049  (2024)

\bibitem{huang2024restarted}
Huang, Y., Zhang, W., Li, H., Ge, D., Liu, H., Ye, Y.: Restarted primal-dual hybrid conjugate gradient method for large-scale quadratic programming. arXiv preprint arXiv:2405.16160  (2024)

\bibitem{jain2007polynomial}
Jain, K.: A polynomial time algorithm for computing an arrow--debreu market equilibrium for linear utilities. SIAM Journal on Computing  \textbf{37}(1),  303--318 (2007)

\bibitem{kroer2019scalable}
Kroer, C., Peysakhovich, A.: Scalable fair division for'at most one'preferences. arXiv preprint arXiv:1909.10925  (2019)

\bibitem{liang2016convergence}
Liang, J., Fadili, J., Peyr{\'e}, G.: Convergence rates with inexact non-expansive operators. Mathematical Programming  \textbf{159},  403--434 (2016)

\bibitem{lin2025pdcs}
Lin, Z., Xiong, Z., Ge, D., Ye, Y.: Pdcs: A primal-dual large-scale conic programming solver with gpu enhancements. arXiv preprint arXiv:2505.00311  (2025)

\bibitem{liu2025}
Liu, H., Li, J.: Bridging primal and primal-dual analyses: From error bounds to quadratic growth. https://huikang2019.github.io/NonLinear\_PDHCG.pdf  (2025)

\bibitem{lu2023cupdlp}
Lu, H., Yang, J.: cupdlp. jl: A gpu implementation of restarted primal-dual hybrid gradient for linear programming in julia. arXiv preprint arXiv:2311.12180  (2023)

\bibitem{lu2023practical}
Lu, H., Yang, J.: A practical and optimal first-order method for large-scale convex quadratic programming. arXiv preprint arXiv:2311.07710  (2023)

\bibitem{lu2024pdot}
Lu, H., Yang, J.: Pdot: A practical primal-dual algorithm and a gpu-based solver for optimal transport. arXiv preprint arXiv:2407.19689  (2024)

\bibitem{lu2024restarted}
Lu, H., Yang, J.: Restarted halpern pdhg for linear programming. arXiv preprint arXiv:2407.16144  (2024)

\bibitem{lu2023cupdlpc}
Lu, H., Yang, J., Hu, H., Huangfu, Q., Liu, J., Liu, T., Ye, Y., Zhang, C., Ge, D.: cupdlp-c: A strengthened implementation of cupdlp for linear programming by c language. arXiv preprint arXiv:2312.14832  (2023)

\bibitem{mcelfresh2020matching}
McElfresh, D.C., Kroer, C., Pupyrev, S., Sodomka, E., Sankararaman, K.A., Chauvin, Z., Dexter, N., Dickerson, J.P.: Matching algorithms for blood donation. In: Proceedings of the 21st ACM Conference on Economics and Computation. pp. 463--464 (2020)

\bibitem{nan2023fast}
Nan, T., Gao, Y., Kroer, C.: Fast and interpretable dynamics for fisher markets via block-coordinate updates. In: Proceedings of the AAAI Conference on Artificial Intelligence. vol.~37, pp. 5832--5840 (2023)

\bibitem{parkes2015beyond}
Parkes, D.C., Procaccia, A.D., Shah, N.: Beyond dominant resource fairness: Extensions, limitations, and indivisibilities. ACM Transactions on Economics and Computation (TEAC)  \textbf{3}(1),  1--22 (2015)

\bibitem{shen2024jd}
Shen, M., Tang, C.S., Wu, D., Yuan, R., Zhou, W.: Jd. com: Transaction-level data for the 2020 msom data driven research challenge. Manufacturing \& Service Operations Management  \textbf{26}(1),  2--10 (2024)

\bibitem{bernstein}
Tropp, J.A.: An introduction to matrix concentration inequalities. No.~1 (2015)

\bibitem{vazirani2007combinatorial}
Vazirani, V.V.: Combinatorial algorithms for market equilibria. Algorithmic game theory pp. 103--134 (2007)

\bibitem{walras1900elements}
Walras, L.: {\'E}l{\'e}ments d'{\'e}conomie politique pure: ou, Th{\'e}orie de la richesse sociale. F. Rouge (1900)

\bibitem{ye2008path}
Ye, Y.: A path to the arrow--debreu competitive market equilibrium. Mathematical Programming  \textbf{111}(1),  315--348 (2008)

\bibitem{zhang2025solving}
Zhang, F., Boyd, S.: Solving large multicommodity network flow problems on gpus. arXiv preprint arXiv:2501.17996  (2025)

\bibitem{zhang2025hot}
Zhang, G., Gu, Z., Yuan, Y., Sun, D.: Hot: An efficient halpern accelerating algorithm for optimal transport problems. IEEE Transactions on Pattern Analysis and Machine Intelligence  (2025)

\end{thebibliography}

\end{document}